\documentclass [a4paper,12pt, reqno]{amsart}
\usepackage{longtable}
\usepackage{hyperref}
\usepackage[T1]{fontenc}
\usepackage[utf8]{inputenc}
\usepackage[english]{babel}
\usepackage{textcomp}
\usepackage{dsfont}
\usepackage{latexsym}
\usepackage{amssymb}
\usepackage{amsthm}
\usepackage{amsmath}
\DeclareMathAlphabet{\mathpzc}{OT1}{pzc}{m}{en}
\usepackage{yfonts}
\usepackage{xfrac}
\usepackage{newlfont}
\usepackage{graphicx}
\usepackage{mathtools}
\usepackage{comment}
\usepackage{indentfirst}
\usepackage{braket}
\usepackage{mathrsfs}
\usepackage{xcolor}

\usepackage{etoolbox}
\apptocmd{\lim}{\limits}{}{}
\apptocmd{\sup}{\limits}{}{}
\apptocmd{\inf}{\limits}{}{}
\apptocmd{\liminf}{\limits}{}{}
\apptocmd{\limsup}{\limits}{}{}

\usepackage{geometry}
\usepackage{scalerel}[2014/03/10]
\usepackage[usestackEOL]{stackengine}
\newcommand{\dashint}{\,\ThisStyle{\ensurestackMath{%
			\stackinset{c}{.2\LMpt}{c}{.5\LMpt}{\SavedStyle-}{\SavedStyle\phantom{\int}}}%
		\setbox0=\hbox{$\SavedStyle\int\,$}\kern-\wd0}\int}

\DeclareMathOperator{\card}{Card}

\DeclareMathOperator{\supp}{Supp}

\DeclareMathOperator{\diam}{diam}

\newcommand{\ee}{\mathrm{e}}

\newcommand{\loc}{\mathrm{loc}}

\newcommand{\dd}{\mathrm{d}}

\DeclarePairedDelimiter{\abs}{\lvert}{\rvert}

\DeclarePairedDelimiter{\norm}{\lVert}{\rVert}

\let\originalleft\left
\let\originalright\right
\renewcommand{\left}{\mathopen{}\mathclose\bgroup\originalleft}
\renewcommand{\right}{\aftergroup\egroup\originalright}

\newcommand{\N}{\mathbb{N}}
\newcommand{\Z}{\mathbb{Z}}
\newcommand{\Q}{\mathbb{Q}}

\newcommand{\C}{\mathbb{C}}

\newcommand{\R}{\mathbb{R}}

\newcommand{\Ff}{\mathfrak{F}}

\newcommand{\Mf}{\mathfrak{M}}

\newcommand{\Lc}{\mathcal{L}}
\newcommand{\cM}{\mathcal{M}}

\newcommand{\Pc}{\mathcal{P}}

\newcommand{\meg}{\leqslant}
\newcommand{\Meg}{\geqslant}
\newcommand{\eps}{\varepsilon}
\renewcommand{\phi}{\varphi}
\newcommand{\mi}{\mu}

\newcommand{\leftexp}[2]{{\vphantom{#2}}^{#1}{#2}} 
\newcommand{\trasp}{\leftexp{t}}

\author{Mattia Calzi, Elena Rizzo}
\title{Vector-Valued Singular Integrals on Locally Doubling Spaces}

\date{}

\address{Dipartimento di Matematica, Universit\`a degli Studi di	Milano, Via C. Saldini 50, 20133 Milano, Italy}
\email{{\tt mattia.calzi@unimi.it}, {\tt elena.rizzo@unimi.it}} 

\keywords{Locally doubling spaces, Calder\'on--Zygmund operators, Maximal function, Vectorial integration.}
\thanks{{\em Math Subject Classification 2020}: 46G10, 47B90.}
\thanks{The authors are members of the 	Gruppo Nazionale per l'Analisi
	Matematica, la Probabilit\`a e le	loro Applicazioni (GNAMPA) of
	the Istituto Nazionale di Alta Matematica (INdAM). The authors were partially funded by the INdAM-GNAMPA Project CUP\_E5324001950001.
} 

\begin{document}

\theoremstyle{definition}
\newtheorem{deff}{Definition}

\newtheorem{oss}[deff]{Remark}

\newtheorem{ass}[deff]{Assumptions}

\newtheorem{nott}[deff]{Notation}

\newtheorem{ex}[deff]{Example}

\theoremstyle{plain}
\newtheorem{teo}[deff]{Theorem}

\newtheorem{lem}[deff]{Lemma}

\newtheorem{prop}[deff]{Proposition}

\newtheorem{cor}[deff]{Corollary}
\maketitle

\begin{abstract}
	We prove vector-valued boundedness of (suitable) Calder\'on-Zygmund operators and of the (truncated) Hardy--Littlewood maximal function on a \emph{connected} locally doubling metric measure space.
\end{abstract}

\section{Introduction}

Given a metric (or quasi-metric) measure space $(X,d, \mi)$ which satisfies the doubling property, that is, such that there is a constant $C>0$ such that
\begin{equation}\label{eq:1}
\mi(B(x,2r))\meg C\mi(B(x,r))
\end{equation}
for every $x\in X$ and for every $r>0$, it is known since the work of Coifman and Weiss~\cite{CW} that one may develop a reasonably complete theory of Calder\'on--Zygmund operators as in the classical setting of $\R^n$. In particular, see~\cite{Grafakos} for some results on  vector-valued Calder\'on--Zygmund operators.

It is also known that this kind of theory is of a `global' character, in a certain sense, since the Calder\'on--Zygmund decomposition, on which it is based, is of a somewhat non-local nature.
Consequently, if one has to deal with a space which is only locally doubling, that is, a space for which~\eqref{eq:1} holds only for $r\in (0, R]$ for some fixed $R>0$, then the usual Calder\'on--Zygmund theory cannot be applied, even though some extensions of certain aspects of the theory have been studied in a   variety of different settings.  

If the metric space $(X,d)$ is geodesic, that is, if any two points in $ X$ may be joined by a minimizing curve, then one may show that every ball of radius $\meg R$ is a doubling metric measure space, with the induced distance and measure, provided that~\eqref{eq:1} holds for $r\in (0,R]$ (cf.~Remark~\ref{oss:1}). If $(X,d)$ is ultrametric, instead, then it is even easier to show that the balls with radius $\meg 2R$ are doubling (with the same constants).
In both cases, one may therefore apply the usual Calder\'on--Zygmund theory `locally' on each ball, with uniform constants, and then reconstruct a global theory, up to some extent. 

Nonetheless, if $(X,d)$ is neither geodesic nor ultrametric, then it is unclear whether it contains any (non-empty) doubling open subspace at all. The purpose of this note is to address this issue and show that some `local' analogue of the Calder\'on--Zygmund theory may be developed in this context. We shall limit ourselves to proving a local analogue of the main theorems on the continuity of vector-valued Calder\'on--Zygmund operators (Theorem~\ref{teo:1} and Corollaries~\ref{cor:1b} and~\ref{cor:2}), as well as the boundedness of the vector-valued truncated Hardy--Littlewood maximal function (cf.~Theorem~\ref{teo:2}). 
Even though we shall keep track of the requirements on the largest radii for which the doubling condition~\eqref{eq:1} should hold in order for our results to apply, we claim no optimality in this regard; it is in fact possible that a finer analysis may achieve smaller radii on these requirements. This choice is motivated by the fact that locally doubling spaces usually satisfy~\eqref{eq:1} with a constant $C_r$ depending on $r$, in such a way that the mapping $r\mapsto C_r$ is increasing but everywhere finite.
Because of the application (cf.~\cite{BesovTL})  that originally motivated this note, instead of considering $\ell^q(\N)$-valued functions for the vector-valued boundedness of the Hardy--Littlewood maximal function, we consider more general mixed-norm spaces. Apart from that, we follow closely the classical references on the topic (cf.~\cite{CW, Grafakos}), except fot the fact that we work on a metric space instead of a quasi-metric one, for simplicity, and   that we need to assume that the space $X$  be connected   in order to establish a suitable analogue of Whitney's lemma. 
Even though the proofs are relatively simple modifications of the classical ones, we address some subtleties here and there and amend some minor mistakes in some of the classical proofs (cf.~Corollary~\ref{cor:1b} and Theorem~\ref{teo:2}).

\section{Statement and Proof of the Main Results}

We fix a locally compact, \emph{connected}\footnote{We need connectedness only to prove Lemma~\ref{lem:2b}. In fact, one may get away with a weaker assumption, and simply require that for every non-empty, proper, bounded, open subset $U$ of $X$ there is $x\in X\setminus U$ such that $\diam(U)=\diam(U\cup \Set{x})$.} metric space $(X,d)$, endowed with a non-zero positive \emph{inner regular} Radon measure $\mi$ such that $\supp \mi=X$.
For every $R>0$, we denote with $D_R$ the least element of $[1,+\infty]$ such that  
\[
\mi(B(x,2 r))\meg D_R \mi(B(x,r))
\]
for every $x\in X$ and for every $r\in (0,R]$.
Notice that, since we assume that $\supp \mi=X$, one has $\mi(B(x,r))>0$ for every $x\in X$ and for every $r>0$, so that $D_R$ is always well defined (finite or not).

Notice that $\mi$ need not be outer regular, unless $X$ is separable.
In addition, recall that an inner regular Radon measure $\mi$ on $X$ is a Borel measure such that:
\begin{itemize}
	\item $\mi$ is inner regular, that is, for every Borel subset $B$ of $X$, $\mi(B)$ is the least upper bound of the set of $\mi(K)$, where $K$ runs through the set of compact subsets of $B$;
	
	\item every $x\in X$ has an open neighbourhood with finite measure. Since $X$ is locally compact, this is equivalent to saying that every compact subset of $X$ has finite measure.
\end{itemize}

One may then extend $\mi$ to a measure on the $\sigma$-algebra of $\mi$-measurable sets, that is, on the collection of subsets $E$ of $X$ such that for every compact subset $K$ of $X$ there are Borel sets $B,N$ such that $B\subseteq E\cap K\subseteq B\cup N$ and $\mi(N)=0$. 
Notice that we do not assume that $\mi$ be $\sigma$-finite.

\begin{oss}
	If we do not assume that $X$ be locally compact, we may still consider a positive \emph{inner regular} Radon measure $\mi$ on $X$. If $D_R<\infty$ for some $R>0$, then one may show that every ball $B(x,r)$, $r\meg R$, is precompact (and has finite measure). As a consequence, the completion $\widehat X$ of $X$ is locally compact, and $\mi$  induces a positive Radon measure on $\widehat X$ with the same doubling constants. In other words, there is no substantial loss in generality in assuming that $X$ be locally compact.
\end{oss}

\begin{lem}\label{lem:0}
	Take $R>0$ so that $D_R<\infty$. Then $\mi(U), \mi(B(x,r))\in (0,+\infty)$ for every non-empty open subset $U$ of $ X$ with $\diam(U)<2R$, for every $x\in X$, and for every $r\in (0,2R]$.
\end{lem}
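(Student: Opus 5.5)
Positivity is immediate in both cases. Since $\supp\mi=X$, every non-empty open set has positive measure. In particular $\mi(U)>0$, and $\mi(B(x,r))>0$ for every $r>0$. The real content is finiteness, and I would prove it for balls first and then deduce it for $U$.

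\emph{Balls.} Fix $x\in X$. Since $X$ is locally compact and $\mi$ is Radon, $x$ has an open neighbourhood of finite measure. That neighbourhood contains a ball $B(x,\rho)$ with $0<\rho\meg R$, so $\mi(B(x,\rho))<\infty$. Choose $k\in\N$ with $2^k\rho\Meg 2R$. Every radius $2^j\rho$ with $j<k$ lies in $(0,R]$ as long as $2^j\rho\meg R$. Apply the doubling inequality $\mi(B(x,2s))\meg D_R\,\mi(B(x,s))$ iteratively, starting from $s=\rho$ and doubling while $s\meg R$. Once doubling would overshoot, stop and apply the inequality one last time at $s=R$, which gives $B(x,2R)$. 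Since $\rho\meg 2^js\meg R$ along the way, at most finitely many steps are needed, and $\mi(B(x,2R))\meg D_R^{m}\,\mi(B(x,\rho))<\infty$ for some $m$.

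The careful point is to land exactly on radius $2R$ without ever using a radius beyond $R$ in the doubling hypothesis. To arrange this, I would instead pick $s_0=R2^{-m}\meg\rho$ and double $m$ times from $s_0$ up to $R$, then once more to $2R$. This gives $\mi(B(x,2R))\meg D_R^{m+1}\mi(B(x,s_0))<\infty$. By monotonicity, $\mi(B(x,r))<\infty$ for all $r\in(0,2R]$.

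\emph{Open sets.} Let $U$ be non-empty and open with $\diam(U)<2R$. Pick $x\in U$. Then $U\subseteq B(x,r)$ for any $r$ with $\diam(U)<r\meg 2R$; for instance, take $r=2R$, since every $y\in U$ satisfies $d(x,y)\meg\diam(U)<2R$. So $\mi(U)\meg\mi(B(x,2R))<\infty$.

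The only mildly delicate step is the bookkeeping in the doubling iteration, keeping every radius used within $(0,R]$. Everything else is direct.
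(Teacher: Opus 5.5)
Your proof is correct and follows essentially the same route as the paper. The paper likewise takes a small ball of finite measure from the Radon property, doubles its radius up to $2R$ using only doublings at radii in $(0,R]$, gets finiteness for $U$ from $U\subseteq B(x,2R)$ for any $x\in U$, and gets positivity from $\supp\mi=X$; your initial, somewhat muddled description of the iteration is superseded by the clean version starting from $s_0=R2^{-m}\le\rho$, which is exactly what is needed.
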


\begin{proof}
	Observe first that, for every $x\in X$, there is $r\in (0,R]$ such that $\mi(B(x,r))\in (0,\infty)$, since $\mi$ is a Radon measure. The same then holds for $\mi(B(x,r'))$ for every $r'\in (0,r]$, since $\supp\mi=X$, hence for $\mi(B(x,2^k r'))$ for every $k\in\Z$ and for every $r'\in (0,r]$ such that $2^k r'\meg 2 R$, since $D_R$ is finite. Consequently, $\mi(B(x,r))\in (0,+\infty)$ for every $x\in X$ and for every $r\in (0,2R]$.  Now, let $U$ be a non-empty open subset of $X$ with $\diam(U)< 2R$. Since $U\subseteq B(x,2R)$ for every $x\in U$, it is clear that $\mi(U)<\infty$. Since $\supp \mi=X$, it is clear that $\mi(U)>0$.
\end{proof}

\begin{oss}\label{oss:1}
	Assume that $X$ is geodesic and that $D_R$ is finite for some $R>0$. Then, the restriction of $\mi$ to $B(x,R)$ is doubling for every $x\in X$, with doubling constant $\meg 3 D_R$.
	
	To see this, fix $x\in X$ and $r\in (0,R]$, and take $y\in B(x,r)$ and $r'\in (0,2r]$. Take a minimizing geodesic $\gamma\colon [0,1]\to X$ with $\gamma(0)=x$ and $\gamma(1)=y$. 
	If $d(x,y)\meg r'/2 (\meg r)$, then $B(x,r'/2)\subseteq B(y,r')\cap B(x,r)$, so that
    \begin{align*}
        \mi(B(y,r')\cap B(x,r))
        &\Meg \mi(B(x,r'/2))\\
        &\Meg D_R^{-3}\mi(B(x,\min(4r',r)))\\
        &\Meg D_R^{-3}\mi(B(y,2 r')\cap B(x,r)).
    \end{align*}
     If, otherwise, $d(x,y)>r'/2$, then we may  choose  $t\in (0,1)$ in such a way that $d(\gamma(t),y)= r'/2$, so that
     \begin{equation*}
         d(\gamma(t), x)=d(x,y)-d(\gamma(t),y)= d(x,y)-r'/2< r-r'/2.
     \end{equation*}
     Then, $B(\gamma(t), r'/2)\subseteq B(y,r')\cap B(x,r)$, so that
     \begin{align*}
         \mi(B(y,r')\cap B(x,r))
         &\Meg \mi(B(\gamma(t),r'/2))\\
         &\Meg D_R^{-3}\mi(B(\gamma(t),\min(4r',2r)))\\
         &\Meg D_R^{-3}\mi(B(y,2 r')\cap B(x,r)).
     \end{align*}
     Thus, $B(x,r)$, with the induced structure, is a doubling metric measure space with doubling constant $\meg D_R^3$.
\end{oss}

We proceed with some covering lemmas (cf.~\cite[Theorems 1.2 and 1.3 of Chapter III]{CW} for the doubling case).

\begin{lem}\label{lem:1b}
	Take $R>0$ so that $D_R<\infty$.
	Let $E$ be a   subset of $X$ with $\diam E<2R$, and let $r\colon E\to (0,R]$ be a function.  Then, there is a countable subset $N$ of $E$ such that the $B(x,r(x))$, $x\in N$, are pairwise disjoint, while the $B(x,3 r(x))$, $x\in N$, cover $E$. 
\end{lem}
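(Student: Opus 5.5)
Since $E$ has diameter $<2R$ and all radii are $\meg R$, every ball $B(x,r(x))$ with $x\in E$ lies in the single ball $B(x_0,3R)$ for any fixed $x_0\in E$. This ball has finite positive measure by Lemma~\ref{lem:0}, as $3R\meg 4R$ lets us use one more doubling step from $2R$. The plan is to run the classical Vitali-type greedy selection by dyadic scales. Finite measure will be used to show that only countably many disjoint balls can be chosen.

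First I partition $E$ into the layers $E_k=\Set{x\in E\colon r(x)\in (2^{-k-1}R,2^{-k}R]}$, $k\in\N$. By Zorn's lemma I pick a maximal subset $N_0\subseteq E_0$ such that the balls $B(x,r(x))$, $x\in N_0$, are pairwise disjoint. Inductively, I pick a maximal subset $N_k\subseteq E_k$ such that these balls are pairwise disjoint and also disjoint from the balls already chosen with centres in $N_0\cup\dots\cup N_{k-1}$. Set $N=\bigcup_k N_k$. Disjointness holds by construction.

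Covering: let $y\in E_k$. By maximality, $B(y,r(y))$ meets some $B(x,r(x))$ with $x\in N_j$ and $j\meg k$. Then $r(x)>2^{-j-1}R\Meg 2^{-k-1}R\Meg r(y)/2$, so $r(y)<2r(x)$. Hence $d(x,y)<r(x)+r(y)<3r(x)$, and $y\in B(x,3r(x))$.

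Countability, which is the main point: each $N_k$ must be countable. For $x\in N_k$, $B(x,r(x))\supseteq B(x,2^{-k-1}R)$, and $B(x,2^{-k-1}R)\subseteq B(x_0,3R)\subseteq B(x,5R)\subseteq B(x,8R)$. Doubling applies at radii $\meg R$, so I go down from $2R$ to $2^{-k-1}R$ in $k+2$ steps. Going from $2R$ up to $8R$ needs two more steps, at radii $2R$ and $4R$, where $D_R$ does not apply. I avoid this as follows. Since $\diam E<2R$, we have $B(x_0,3R)\subseteq B(x,5R)$; more usefully, $B(x,2^{-k-1}R)\subseteq B(x,3R)$, so I compare with $\mi(B(x,2R))$ instead.

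Concretely, $\mi(B(x,2^{-k-1}R))\Meg D_R^{-(k+2)}\mi(B(x,2R))$, and $\mi(B(x,2R))\Meg \mi(B(x,R))>0$. A lower bound uniform in $x$ is not actually needed: for each $m$, the set of $x\in N_k$ with $\mi(B(x,r(x)))>1/m$ is finite, since these balls are disjoint and contained in $B(x_0,3R)$, whose measure is finite. Moreover every ball in the family has positive measure because $\supp\mi=X$. So $N_k$ is a countable union of finite sets, hence countable, and then $N$ is countable.

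The only delicate step is confirming that $\mi(B(x_0,3R))<\infty$. Here I would instead use $B(x_0,3R)\subseteq B(x_0,4R)$ together with local finiteness. Lemma~\ref{lem:0} only gives radii up to $2R$, so alternatively I would cover by noting that every chosen ball lies in $B(x_0,2R)$ when $r(x)\meg R$… which fails in general. So I would fall back on this: the open set $U=\bigcup_{x\in E}B(x,r(x))$ is not needed, since it suffices that each chosen ball lies in $B(x,R)\cup$… Failing a clean bound, I would instead split $E$ into the sets $E\cap B(z,R/2)$ and argue locally. I expect this finiteness point to be the main obstacle.
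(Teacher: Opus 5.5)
You set the argument up soundly: dyadic layers with maximal disjoint subfamilies (Zorn's lemma). Your covering step is also correct, since maximality forces $B(y,r(y))$ to meet a chosen ball $B(x,r(x))$ with $r(x)>r(y)/2$, whence $y\in B(x,3r(x))$.

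The gap is in the countability step, as you suspect. Lemma~\ref{lem:0} does not give $\mu(B(x_0,3R))<\infty$. The constant $D_R$ controls $\mu(B(x,2s))/\mu(B(x,s))$ only for $s\le R$, so you can double up to radius $2R$ but no further; ``one more doubling step from $2R$'' is not available. The claim is in fact false in general. Take $X=\R$ with the metric $d(x,y)=\min(\abs{x-y},5R/2)$, which is connected and locally compact with the usual topology, and Lebesgue measure. Balls of radius $\le 2R$ are ordinary intervals, so $D_R=2$, yet $B(x_0,3R)=\R$ has infinite measure. None of your fallback suggestions is carried through, so as written the proof does not show that $N$ is countable.

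The missing idea is the device the paper itself uses: shrink the balls before comparing measures, exploiting that $\diam E<2R$ \emph{strictly}.
Fix $x_0\in E$, set $\eps_0\coloneqq 2R-\diam E>0$, and for $x\in N$ let $s(x)\coloneqq\min(r(x),\eps_0)$. Then:
\begin{itemize}
\item $B(x,s(x))\subseteq B(x,r(x))$, so these smaller balls are still pairwise disjoint;
\item each has positive measure because $\supp\mu=X$;
\item $B(x,s(x))\subseteq B(x_0,\diam E+\eps_0)=B(x_0,2R)$, which has finite measure by Lemma~\ref{lem:0}.
\end{itemize}
Your counting argument then applies: fewer than $m\,\mu(B(x_0,2R))$ of them can have measure $>1/m$. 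This shows all of $N$ is countable at once, with no need to treat the layers separately.

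With this repair, your proof is a legitimate alternative to the paper's. The paper chooses a sequence greedily, with $r(x_j)$ more than half the supremum of $r$ over the still-uncovered part of $E$. There, countability is automatic and the work goes into proving $r(x_j)\to 0$, which needs a \emph{uniform} lower bound on $\mu(B(x_j,\eps))$ obtained from doubling. In your version, covering comes free from maximality and countability needs only positivity of the measures. Doubling then enters only through the finiteness of $\mu(B(x_0,2R))$.
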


\begin{proof} 
	We construct by induction a (possibly finite) sequence $(x_j)_j$ of elements of $E$ as follows. Assume that the $x_{j'}$, $j'<j$ have been constructed in such a way that $B(x_{j'},r(x_{j'}))$ are pairwise disjoint, that
    \begin{equation*}
        E_{j'}\coloneqq E\setminus \Big(\bigcup_{j''<j'} B(x_{j''},3 r(x_{j''}))\Big)
    \end{equation*}
    is not empty,  and that $r(x_{j'})>\frac 1 2 \sup\{r(x)\colon x\in E_{j'}\}$. 
	If $E_j=\emptyset$, that is, if the $B(x_{j'},3r(x_{j'}))$ cover $E$, then we stop. If, otherwise, $E_j\neq \emptyset$, then we choose $x_j\in E_j$ such that $r(x_j)>\frac 1 2 \sup\{r(x)\colon x\in E_{j}\}$.
	Now, observe that, if $j'<j$, then $x_j\in E_{j}\subseteq E_{j'}$, so that
    \begin{equation*}
        d(x_j, x_{j'})\Meg 3 r(x_{j'}) \text{ and } r(x_{j'})>\frac 1 2 r(x_j)
    \end{equation*}
    by the inductive assumption. Then,
    \begin{equation*}
        B(x_j, r(x_j))\cap B(x_{j'}, r(x_{j'}))=\emptyset \text{ for every } j'<j.
    \end{equation*}
	Now we let $N$ be the set of the $x_j$. It is clear by construction that the $B(x,r(x))$, $x\in N$, are pairwise disjoint. Moreover, if $N$ is finite, again by construction it is immediate that the $B(x, 3r(x))$, $x\in N$ cover $E$. On the other hand, if $N$ is infinite, let us prove by contradiction that $r(x_j)\to 0$ for $j\to \infty$. Assume that there are $\eps\in (0,2R-\diam(E)]$ and an infinite subset $N'$ of $N$ such that $r(x)\Meg \eps$ for every $x\in N'$. Take $k\in \N$ so that  $\eps> R/2^k$ and fix $\tilde x\in E$. Thus,
	\[
	\mi(B(x,\eps))\Meg \mi(B(x, R/2^k)) \Meg D_R^{-k-1} \mi(B(x, 2R))\Meg D_R^{-k-1} \mi(B(\tilde x,\eps)) 
	\]
	for every $x\in N'$, so that
    \begin{equation*}
        \mi(B(\tilde x, 2R))\Meg \sum_{x\in N'} \mi(B(x,\eps))=+\infty,
    \end{equation*}
    which contradicts  Lemma~\ref{lem:0}. Therefore, $r(x_j)\to 0$ for $j\to \infty$.
	Consequently, if by contradiction there is $x'\in E\setminus \smash{\big(\bigcup_{x\in N} B(x,3 r(x))\big)}$, then there is $j\in \N$ such that $r(x_j)<r(x')/2$. However, since $x'\in E\setminus \smash{\big(\bigcup_{x\in N} B(x,3 r(x))\big)\subseteq E_j}$, we also have that
    \begin{equation*}
        \sup_{x\in E_j}r(x) \Meg r(x')>2 r(x_j),
    \end{equation*}
    which contradicts our choice of $x_j$.
\end{proof}

\begin{lem}	\label{lem:2b} 
	Take $R>0$ so that $D_R<\infty$.
	Let $U$ be a \emph{proper}\footnote{That is, $U\neq X$. This is automatically satisfied if $\diam(X)\Meg R$ (which is a natural assumption, since otherwise $\mi$ is doubling).} \emph{open} subset of $X$ with  $\diam(U)<R$. Then, there are a countable subset $N$ of $U$ and a function $r\colon N\to (0,R/2]$ such that
    \begin{equation*}
        \chi_U\meg \sum_{x\in N} \chi_{B(x,r(x))}\meg D_R^5\chi_U,
    \end{equation*}
    the $B(x,r(x)/3)$, $x\in N$, are pairwise disjoint, and $B(x,\kappa r(x))\setminus  U\neq \emptyset$ for every $x\in N$ and for every $\kappa>2$.
\end{lem}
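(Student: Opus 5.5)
The plan is a Whitney-type decomposition of $U$ adapted to the distance from the complement. I would obtain it by applying Lemma~\ref{lem:1b} to a radius function proportional to that distance, and then bound the overlap with the doubling condition. For $x\in U$, set $\delta(x)\coloneqq d(x,X\setminus U)$. This is positive because $U$ is open, and finite because $U\neq X$. Connectedness enters only to bound $\delta$ from above. We may assume $U\neq\emptyset$, since otherwise $N=\emptyset$ works. Since $X$ is connected and $U$ is non-empty, open and proper, $U$ is not closed, so there is $x_0\in \overline U\setminus U$. Then $\diam(U\cup\Set{x_0})=\diam(U)$, and hence $\delta(x)\meg d(x,x_0)\meg \diam(U)<R$ for every $x\in U$.

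Next I would apply Lemma~\ref{lem:1b} with $E=U$ (so $\diam E<R<2R$) and radius function $\rho(x)=\delta(x)/6\in(0,R]$. This gives a countable $N\subseteq U$ such that the balls $B(x,\delta(x)/6)$, $x\in N$, are pairwise disjoint and the balls $B(x,\delta(x)/2)$, $x\in N$, cover $U$. I then define $r(x)\coloneqq\delta(x)/2\in(0,R/2]$, which settles the following parts of the statement:
\begin{itemize}
\item the balls $B(x,r(x)/3)$ are pairwise disjoint;
\item $\chi_U\meg\sum_{x\in N}\chi_{B(x,r(x))}$, by the covering property;
\item each $B(x,r(x))$ is contained in $U$, since $r(x)<\delta(x)$, so the sum vanishes off $U$;
\item for $\kappa>2$ we have $\kappa r(x)>\delta(x)$, and since $\delta(x)$ is an infimum, some point of $X\setminus U$ lies at distance $<\kappa r(x)$ from $x$, so $B(x,\kappa r(x))\setminus U\neq\emptyset$.
\end{itemize}

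The main step is the overlap bound $\sum_{x\in N}\chi_{B(x,r(x))}\meg D_R^5$. Fix $y\in U$ and let $N_y$ be the set of $x\in N$ with $d(x,y)<\delta(x)/2$. Since $\delta$ is $1$-Lipschitz, $\abs{\delta(x)-\delta(y)}<\delta(x)/2$, which gives $\tfrac23\delta(y)<\delta(x)<2\delta(y)$. For $x\in N_y$ the disjoint balls $B(x,\delta(x)/6)$ contain $B(x,\delta(y)/9)$. They also lie in $B(y,\tfrac23\delta(x))\cap U\subseteq B(y,\tfrac43\delta(y))\cap U$. Conversely, $B(y,\tfrac43\delta(y))\cap U\subseteq B(x,\min(\tfrac73\delta(y),\diam U))$.

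It remains to compare $\mi(B(x,\delta(y)/9))$ with $\mi(B(x,\min(\tfrac73\delta(y),\diam U)))$ using at most five doublings. The delicate point is to apply each doubling only at radii $\meg R$. Starting from $\delta(y)/9$, five doublings reach $\tfrac{32}{9}\delta(y)\Meg\tfrac73\delta(y)$. If some intermediate radius would exceed $R$, I stop at the first radius $\Meg\diam U$; this is legitimate because $\diam U<R$ and the previous radius is then $<R$. This yields $\mi(B(x,\delta(y)/9))\Meg D_R^{-5}\,\mi(B(y,\tfrac43\delta(y))\cap U)$, and this quantity is positive and finite by Lemma~\ref{lem:0}. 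Summing over the disjoint balls indexed by $N_y$ then gives $\card N_y\meg D_R^5$.
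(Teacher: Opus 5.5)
Your proof is correct and is essentially the paper's: it uses the same application of Lemma~\ref{lem:1b} with radius $\delta/6$, the same choice $r=\delta/2$, and the same disjoint-balls count giving $D_R^5$. The only difference is bookkeeping: the paper (in your notation) compares $B(x,\delta(x)/6)$ with $B(x,2\delta(x))\supseteq B(y,\delta(y))$ and then doubles once at $y$, so every doubling radius is $\meg\delta(\cdot)<R$ and no truncation at $\diam U$ is needed.

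One small slip: with open balls, $U\subseteq B(x,\diam U)$ can fail when the diameter is attained, so stop at the first radius $>\diam U$ instead (the previous radius is then $\meg\diam U<R$).
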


\begin{proof}
	We may assume that $U\neq \emptyset$.
	We apply Lemma~\ref{lem:1b} to the set $U$ and the function $r'\colon U\to (0,R]$ defined as $r'(x)=\frac{1}{6} d(x, X\setminus U)$ for every $x\in U$.
    Observe that, since
	$X$ is connected and $U\neq \emptyset, X$,  there is $\tilde x\in \overline U\setminus U$. It follows that
    \begin{equation*}
        d(x,X\setminus U)\meg d(x,\tilde x)\meg R
    \end{equation*}
    for every $x\in U$, so that $r'(x)\in (0,R/6]$ for every $x\in U$. Then, by Lemma~\ref{lem:1b}, there is a countable set $N\subseteq U$ such that the $B(x,r'(x))$, $x\in N$, are pairwise disjoint, while the $B(x,3r'(x))$, $x\in N$, cover $U$. We set
    \begin{equation*}
        r(x)\coloneqq 3 r'(x)=\frac{1}{2} d(x, X\setminus U)\in (0,R/2]
    \end{equation*}
    for every $x\in N$, so that the balls $B(x,r(x))$, $x\in N$, cover $U$. It is also clear that $r(x)<d(x,X\setminus U)$, so that $B(x,r(x))\subseteq U$ for every $x\in N$. In addition, if $\kappa>2$ and $x\in N$, then $B(x,\kappa r(x))=B(x,3\kappa r'(x))\not \subseteq U$ by the definition of $r'(x)$, since $3\kappa>6$. 
	Next, observe that, if $x\in U$ and $N_x\coloneqq \Set{x'\in N\colon d(x',x)<r(x')}$, then, for every $x'\in N_x$,
    \begin{equation*}
        2r(x')=6 r'(x')\meg d(x, X\setminus U)+ d(x,x')<d(x, X\setminus U)+ r(x'),
    \end{equation*}
    so that $r(x')< d(x,X\setminus U)$ and then $N_x\subseteq B(x, d(x,X\setminus U))$. Analogously,
    \begin{equation*}
        6 r'(x)=d(x, X\setminus U)\meg d(x,x')+ 6 r'(x')< 9 r'(x'),
    \end{equation*}
    so that $B(x,6 r'(x))\subseteq B(x', 6 r'(x)+ r(x'))\subseteq B(x', 12 r'(x'))$ for every $x'\in N_x$. Consequently,   
	\[
	\begin{split}
		\mi(B(x,12 r'(x)))&\Meg \sum_{x'\in N_x} \mi(B(x',r'(x')))\\
		&\Meg D_R^{-4}\sum_{x'\in N_x} \mi(B(x',12 r'(x')))\\
		&\Meg \card(N_x) D_R^{-4} \mi(B(x,6r'(x)))\\
		&\Meg \card(N_x) D_R^{-5} \mi(B(x,12 r'(x))),
	\end{split}
	\]
	so that $\card(N_x)\meg D_R^5$ since $\mi(B(x,12r'(x)))\in (0,+\infty)$ by Lemma~\ref{lem:0}.	
\end{proof}

\begin{deff}
	Let $B$ be a Banach space.
	For every $R>0$, we define 
	\[
	(\cM_R f)(x)\coloneqq \sup_{\substack{y\in X, r\in (0,R]\\ x\in B(y,r) }} \dashint_{B(y,r)} \abs{f}\,\dd \mi
	\]
	and
	\[
	(\widetilde \cM_R f)(x)\coloneqq \sup_{r\in (0,R]} \dashint_{B(x,r)} \abs{f}\,\dd \mi
	\]
	for every $\mi$-measurable function $f\colon X\to B$ and for every $x\in X$, with the convention $\frac{\infty}{\infty}=0$. 
\end{deff}
Recall that $f$ is $\mi$-measurable if and only if for every compact subset $K$ of $X$ and for every $\eps>0$ there is a compact subset $K'$ of $K$ such that $\mi(K\setminus K')<\eps$ and such that the restriction of $f$ to $K'$ is continuous. In particular, also $\abs{f}$ is $\mi$-measurable. We are therefore considering `strong' measurability and `Bochner' integrals (cf.~\cite{DS,DU}).

\begin{oss}
	Take $R>0$ and a $\mi$-measurable function $f\colon X\to B$ for some Banach space $B$. Then,
	\[
	\widetilde \cM_R f\meg \cM_R f\meg D_R\widetilde \cM_{2R} f.
	\]
\end{oss}

We now prove the boundedness of the maximal function $\cM_R$, which will be crucial in establishing a somewhat `local' Calder\'on--Zygmund decomposition (cf.~\cite[Theorem 2.1 and Corollary 2.3 of Chapter III]{CW} for the doubling case).

\begin{lem}\label{lem:3b}
	Take $R>0$ so that $D_{3R}<\infty$.
	The following hold:
	\begin{enumerate}
		\item[\textnormal{(1)}] for every $\mi$-measurable function $f\colon X\to B$, $\cM_R f$ is a lower semi-continuous function on $X$;
		
		\item[\textnormal{(2)}] for every $\mi$-measurable set $E\subseteq X$ with $\diam(E)<6R$, for every $\mi$-measurable function $f\colon X\to B$ and for every $\lambda>0$,
		\[
		\mi\Bigl( \Set{x\in E\colon (\cM_R f)(x) > \lambda } \Bigr)\meg \frac{D_{ 3R}^4 \norm{f}_{L^1( \mi; B)}}{\lambda};
		\] 
		
		\item[\textnormal{(3)}] for every $\mi$-measurable set $E\subseteq X$ with $\diam(E)<6R$, for every $\mi$-measurable function $f\colon X\to B$, and for every $p\in (1,\infty]$,
        \begin{equation*}
            \norm{\chi_E \cM_R f}_{L^p(\mi)}\meg 2 p'^{1/p} D_{ 3R}^{4/p}   \norm{f}_{L^p( \mi;B)};
        \end{equation*}
		
		\item[\textnormal{(4)}]  for every $f\in L^1_\loc(\mi;B)$,
        \begin{equation*}
            \dashint_{B(x,r)} f\,\dd \mi\to f(x)
        \end{equation*}
        for $\mi$-almost every $x\in X$.\footnote{Notice that we assume $\mi$ to be inner regular. Since it is then naturally to call $\mi$-negligible a $\mi$-measurable set $N$ such that $\mi(N)=0$, we shall \emph{not} follow the terminology of~\cite{BourbakiInt2}, according to which those sets should be called \emph{locally} $\mi$-negligible.}
	\end{enumerate}
\end{lem}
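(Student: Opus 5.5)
We follow the classical scheme of Coifman--Weiss, replacing global doubling by the covering Lemma~\ref{lem:1b}, applied to sets of diameter less than $6R$ with the constant $D_{3R}$ (note $6R = 2\cdot 3R$, so Lemma~\ref{lem:1b} is available with $R$ replaced by $3R$).

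For (1), fix $\lambda>0$ and $x$ with $(\cM_R f)(x)>\lambda$. Then there are $y$ and $r\in(0,R]$ with $x\in B(y,r)$ and $\dashint_{B(y,r)}\abs{f}\,\dd\mi>\lambda$. Since $B(y,r)$ is open, every $x'\in B(y,r)$ also satisfies $(\cM_R f)(x')>\lambda$. Hence $\Set{\cM_R f>\lambda}$ is open, and $\cM_R f$ is lower semi-continuous. The convention $\frac\infty\infty=0$ causes no trouble, because all balls of radius $\meg R$ have finite positive measure by Lemma~\ref{lem:0}.

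For (2), let $E_\lambda=\Set{x\in E\colon (\cM_R f)(x)>\lambda}$. By inner regularity it suffices to bound $\mi(K)$ for compact $K\subseteq E_\lambda$; in fact the argument will work for $E_\lambda$ directly.
\begin{itemize}
\item For each $x\in E_\lambda$ choose $y_x$ and $r_x\in(0,R]$ with $x\in B(y_x,r_x)$ and $\mi(B(y_x,r_x))<\lambda^{-1}\int_{B(y_x,r_x)}\abs f\,\dd\mi$.
\item Then $B(y_x,r_x)\subseteq B(x,2r_x)$, and $2r_x\meg 2R\meg 3R$.
\item Apply Lemma~\ref{lem:1b} to $E_\lambda$, which has diameter less than $6R$, with the function $x\mapsto 2r_x\in(0,3R]$ and constant $D_{3R}$. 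This yields a countable $N$ such that the $B(x,2r_x)$, $x\in N$, are disjoint and the $B(x,6r_x)$ cover $E_\lambda$.
\item Since $B(x,6r_x)\subseteq B(y_x,7r_x)\subseteq B(y_x,8r_x)$, we get $\mi(B(x,6r_x))\meg D_{3R}^3\mi(B(y_x,r_x))$, because $r_x,2r_x,4r_x\meg 3R$.
\item Summing, and using that the $B(y_x,r_x)\subseteq B(x,2r_x)$ are disjoint,
\[
\mi(E_\lambda)\meg D_{3R}^3\lambda^{-1}\norm{f}_{L^1(\mi;B)}.
\]
\end{itemize}
This is better than the claimed $D_{3R}^4$; the extra factor leaves slack if the radii bookkeeping needs one more doubling. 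The one point needing care is the measurability of $E_\lambda$, which follows from (1).

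For (3), the case $p=\infty$ is trivial. For $1<p<\infty$, split $f=f\chi_{\{\abs f>\lambda/2\}}+f\chi_{\{\abs f\meg\lambda/2\}}$. This gives $\Set{\cM_R f>\lambda}\subseteq\Set{\cM_R(f\chi_{\{\abs f>\lambda/2\}})>\lambda/2}$. Apply (2) and the layer-cake formula:
\[
\norm{\chi_E\cM_Rf}_p^p=p\int_0^\infty\lambda^{p-1}\mi(E_\lambda)\,\dd\lambda\meg 2pD^4_{3R}\int_0^\infty\lambda^{p-2}\int_{\{\abs f>\lambda/2\}}\abs f\,\dd\mi\,\dd\lambda .
\]
By Fubini the right-hand side equals $2^{p}p'D_{3R}^4\norm f_p^p$. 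Taking $p$-th roots gives the stated constant $2p'^{1/p}D_{3R}^{4/p}$.

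For (4), the statement is local. Fix a ball $B_0=B(x_0,R)$ and replace $f$ by $f\chi_{B(x_0,2R)}\in L^1(\mi;B)$; this does not change the averages over balls $B(x,r)$ with $x\in B_0$ and $r<R$. Define
\[
\Omega g(x)=\limsup_{r\to0}\dashint_{B(x,r)}\abs{g-g(x)}\,\dd\mi ,
\]
so that $\Omega g\meg\cM_Rg+\abs g$. Approximate $f$ in $L^1(\mi;B)$ by continuous compactly supported $B$-valued functions $g$, for which $\Omega g=0$. Such $g$ are dense, since $\mi$ is Radon, $f$ is strongly measurable, and $f$ is approximated by simple functions times continuous cutoffs. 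Then $\Omega f\meg\cM_R(f-g)+\abs{f-g}$. Using (2) on $E=B_0$ together with Chebyshev, $\mi(\Set{x\in B_0\colon\Omega f>\lambda})$ is arbitrarily small, hence zero. Finally, cover $X$ by such balls.

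The main obstacle is this last covering step, since $\mi$ need not be $\sigma$-finite and $X$ need not be separable. Our plan is to argue through local negligibility: a set whose intersection with every compact set (each compact set is covered by finitely many balls $B_0$) is null is $\mi$-negligible in the inner-regular sense adopted in the footnote.
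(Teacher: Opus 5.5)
Your proof follows the paper's route in all four parts. You use Lemma~\ref{lem:1b} at scale $3R$ with the radii $2r_x$ in (2). In (3) you split $f$ against the trivial $L^\infty$ bound, Marcinkiewicz-style, and your computation gives exactly the constant $2p'^{1/p}D_{3R}^{4/p}$ that the paper obtains by quoting Grafakos. In (4) you combine density of continuous functions with (2) and the local-negligibility remark. Using the disjointness of the $B(y_x,r_x)\subseteq B(x,2r_x)$ directly, rather than first passing to averages over $B(x,2r_x)$ as the paper does, is a slightly more economical variant. Two details need correcting.

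In (2), the justification ``because $r_x,2r_x,4r_x\meg 3R$'' is false. Since $r_x$ can be as large as $R$, $4r_x$ can reach $4R>3R$, and then $D_{3R}$ gives no control of $\mi(B(y_x,8r_x))$ by $\mi(B(y_x,4r_x))$. The bound with $D_{3R}^3$ is still true if you double around $x$ first:
\[
\mi(B(x,6r_x))\meg D_{3R}\,\mi(B(x,3r_x))\meg D_{3R}\,\mi(B(y_x,4r_x))\meg D_{3R}^3\,\mi(B(y_x,r_x)).
\]
This only doubles at radii $3r_x,2r_x,r_x\meg 3R$, and a fortiori the stated bound with $D_{3R}^4$ holds.

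In (4), the claim $f\chi_{B(x_0,2R)}\in L^1(\mi;B)$ does not follow from $f\in L^1_\loc(\mi;B)$. The space $X$ is locally compact but not necessarily complete, so balls of radius $2R$ need not be relatively compact. For instance, take $X=(0,1)$ with Lebesgue measure, where $D_\rho\meg 4$ for every $\rho>0$. There $f(x)=1/x$ is locally integrable, yet $f\chi_{B(x_0,2R)}\notin L^1$ whenever $x_0<2R$. The fix is to localize at a smaller scale: choose $\rho\in(0,R]$ with $\overline{B(x_0,2\rho)}$ compact, replace $f$ by $f\chi_{B(x_0,2\rho)}$, and work on $B(x_0,\rho)$ with $r<\rho$. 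Every compact set is still covered by finitely many such balls, so your concluding negligibility argument is unaffected.
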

 
Observe that (2) and (3) may actually be improved removing the assumption on $\diam(\supp f)$. We shall not prove this fact here, but only in Theorem~\ref{teo:2}.

\begin{proof} 
	(1) Take $\lambda\Meg 0$ and take $x\in X$ such that $(\cM_R f)(x)>\lambda$. Then,  there are $y\in X$ and $r\in (0,R]$ such that $x\in B(y,r)$ and $\dashint_{B(y,r)} \abs{f}\,\dd \mi>\lambda$, so that $(\cM_R f)(x')>\lambda$ for every $x'\in B(y,r)$. The assertion follows.
	
	(2) Take $\lambda>0$, and observe that, by (1),
    \begin{equation*}
        U_\lambda\coloneqq \Set{x\in X\colon (\cM_R f)(x)>\lambda}
    \end{equation*}
    is an open subset of $X$. 
	If $x\in U_{\lambda}$, then there are $y_x\in X$ and  $r_x\in (0,R]$ such that $x\in B(y_x, r_x)$ and $\dashint_{B(y_x,r_x)}\abs{f}\,\dd \mi>\lambda$. If we set $r(x)\coloneqq 2 r_x\in (0,2R]$, then
	\[
	\dashint_{B(x,r(x))} \abs{f}\,\dd \mi\Meg \frac{1}{D_{ 3R}^2}\dashint_{B(y_x,r_x)} \abs{f}\,\dd \mi>\frac{\lambda}{D_{ 3R}^2}
	\]  
	since
    \begin{equation*}
        \mi(B(x,r(x)))\meg D_{ 3R} \mi(B(x,r_x))\meg D_{ 3R}\mi(B(y_x,2r_x))\meg  D_{ 3R}^2 \mi(B(y_x,r_x)).
    \end{equation*}
    Then, by Lemma~\ref{lem:1b} we may find a countable subset $N$ of $E\cap U_{\lambda}$ such that the $B(x,r(x))$, $x\in N$, are pairwise disjoint, while the $B(x,3r(x))$, $x\in N$, cover $E\cap U_{\lambda}$. Then,
	\[
	\begin{split}
		\norm{f}_{L^1(\mi;B)}&\Meg \sum_{x\in N} \int_{B(x,r(x))} \abs{f}\,\dd \mi\\
		& \Meg \frac{\lambda}{D_{ 3R}^2} \sum_{x\in N} \mi(B(x,r(x)))\\
		&\Meg  \frac{\lambda}{D_{ 3R}^4}\sum_{x\in N} \mi(B(x,3 r(x)))\\
		&\Meg  \frac{\lambda}{D_{ 3R}^4} \mi(E\cap U_{\lambda})
	\end{split}
	\]
	whence the conclusion by the arbitrariness of $\lambda$.
	
	(3) This follows by interpolation, since clearly $(\cM_R f)(x)\meg \norm{f}_{L^\infty(\mi;B)}$ for every $x\in X$ (cf.~\cite[Theorem 1.3.2]{GrafakosClassical}).
	
	(4) Since the assertion is local,\footnote{Notice that, by inner regularity, a set $N$ is $\mi$-negligible if and only if $N\cap B(x,R)$ is $\mi$-negligible for every $x\in X$ even if $X$ need not be separable.} we may assume that $f\in L^1(\mi;B)$ and that $\diam(\supp f)<6 R$. By~\cite[Remark 3 to Proposition 3 of Chapter IX, \S\ 5, No.\ 1]{BourbakiInt2},   there is a sequence $(g_j)$ of continuous functions from $ X$ into $ B$ such that $\norm{f-g_j}_{L^1(\mi;B)}<2^{-j}$ for every $j\in\N$, and such that all the $g_j$ are supported in a fixed set with diameter $<6R$. We may also assume that $g_j(x) \to f(x)$ for $\mi$-almost every $x\in X$. Therefore, 
	\[
	\begin{split}
		&\limsup_{r\to 0^+}\abs*{\dashint_{B(x,r)} f\,\dd \mi-f(x) }\\
        &\quad\meg \limsup_{r\to 0^+} \dashint_{B(x,r)} \!\abs{f-g_j}\,\dd \mi+ \abs{f(x)-g_j(x)}+\limsup_{r\to 0^+}\abs*{\dashint_{B(x,r)} g_j\,\dd \mi-g_j(x) }\\
		&\quad\meg \cM_R (f-g_j) (x)+ \abs{f(x)-g_j(x)}
	\end{split}
	\]
	for every $x\in X$ and for every $j\in \N$. The assertion follows by means of (2).
\end{proof}

\begin{lem}\label{lem:4b}
	Let $\kappa\in (2,3]$ and $R>0$ so that $D_{3\kappa R}<\infty$. Let $E$ be a $\mi$-measurable subset of $X$ with $\mi(E)>0$ and $\diam(E)< R$ and take $f\in~L^1(\mi;B)$ concentrated in $E$, and $\alpha> D_{3\kappa R}^4 \norm{f}_{L^1(\mi;B)}/\mi(E) $. There are   a countable subset $N$ of $E$, a function $r\colon N\to (0,R]$, and $\mi$-measurable functions $g,h_x\colon X\to B$, $x\in N$, such that the following hold:
	\begin{enumerate}
		\item[\textnormal{(1)}] $f=g+\sum_{x\in N} h_x$;
		
		\item[\textnormal{(2)}] $g\in \chi_{B(E,R/2)}L^\infty(\mi;X)$ and
        \begin{equation*}
            \norm{g}_{L^\infty(\mi;X)}\meg D_{3\kappa R}^2\alpha;
        \end{equation*}
		
		\item[\textnormal{(3)}] $g\in L^1(\mi;B)$ and
        \begin{equation*}
            \norm{g}_{L^1(\mi;B)}\meg 3 \norm{f}_{L^1(\mi;B)};
        \end{equation*}
		
		\item[\textnormal{(4)}] $h_x$ is concentrated in $B(x,r(x))\subseteq B(E,R/2)$ for every $x\in N$, and 
		\[
		\sum_{x\in N} \mi(B(x,r(x)))\meg \frac{D_{3\kappa R}^6 \norm{f}_{L^1(\mi;B)}}{\alpha};
		\]
		
		\item[\textnormal{(5)}] $\int_X h_x\,\dd \mi=0$ for every $x\in N$;
		
		\item[\textnormal{(6)}] $h_x\in L^\infty(\mi;B)$ and
        \begin{equation*}
            \norm{h_x}_{L^\infty(\mi;B)}\meg 2 \norm{f}_{L^\infty(\mi;B)}
        \end{equation*}
        for every $x\in N$, moreover
        \begin{equation*}
            \sum_{x\in N} \norm{h_x}_{L^1(\mi;B)}\meg 2 \norm{f}_{L^1(\mi;B)};
        \end{equation*}
		
		\item[\textnormal{(7)}] the family of balls $(B(x,r(x))_{x\in N}$ has uniformly bounded overlap, in particular
        \begin{equation*}
            \sum_{x\in N} \chi_{B(x,r(x))} \meg D_{3\kappa R}^5.
        \end{equation*}
	\end{enumerate}
\end{lem}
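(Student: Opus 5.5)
This is the local Calderón--Zygmund decomposition, obtained by combining the maximal estimate of Lemma~\ref{lem:3b} with the Whitney-type covering of Lemma~\ref{lem:2b}. The level set of the maximal function will be a proper open set of small diameter. We then define the bad parts as localized, mean-zero pieces of $f$, subordinated to a partition of unity attached to the Whitney balls.

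\emph{Step 1 (the open set).} Let $U\coloneqq\Set{x\in X\colon (\cM_{r_0} f)(x)>\alpha}$ with a small maximal radius $r_0$ of order $R/(2\kappa)$. By Lemma~\ref{lem:3b}(1), $U$ is open. Since $f$ is concentrated in $E$, any ball of radius $\meg r_0$ meeting $\supp f$ lies in $B(E,2r_0)$. Hence $U\subseteq B(E,2r_0)$, so $\diam U<R$ once $r_0$ is small enough and the interior of $B(E,R/2)$ is reached.
\begin{itemize}
\item[] \emph{$U$ is proper.} By Lemma~\ref{lem:3b}(2) applied on $B(E,2r_0)$, $\mi(U)\meg D^4_{3\kappa R}\norm{f}_1/\alpha<\mi(E)$. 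Therefore $E\not\subseteq U$, so $U\neq X$.
\item[] \emph{Whitney covering.} Lemma~\ref{lem:2b} gives $N$ and $r$ with bounded overlap $D^5$ (item (7)), the pairwise disjoint balls $B(x,r(x)/3)$, and $B(x,\kappa r(x))\not\subseteq U$. This also gives $\sum\mi(B(x,r(x)))\meg D^5\mi(U)$, and with doubling this yields (4) up to adjusting exponents. One delicate point: $N\subseteq U$ rather than $N\subseteq E$. I would either accept centers in $U$ or argue as the statement intends.
\end{itemize}

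\emph{Step 2 (bad and good parts).} Take the partition $\phi_x\coloneqq\chi_{B(x,r(x))}/\sum_{x'}\chi_{B(x',r(x'))}$, so that $\sum\phi_x=\chi_U$. Define
\[
h_x\coloneqq \Bigl(f-\tfrac{1}{\int\phi_x\,\dd\mi}\int f\phi_x\,\dd\mi\Bigr)\phi_x,
\qquad g\coloneqq f-\sum_x h_x .
\]
These pieces satisfy the required properties as follows.
\begin{itemize}
\item[] \emph{(5) and (1).} Each $h_x$ has integral zero by construction. It is supported in $B(x,r(x))\subseteq U\subseteq B(E,R/2)$. The series converges in $L^1$ thanks to bounded overlap, which gives (1).
\item[] \emph{(6).} Both $\norm{h_x}_\infty\meg2\norm{f}_\infty$ and $\sum\norm{h_x}_1\meg2\sum\int\abs{f}\phi_x\meg2\norm f_1$ follow directly. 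Then (3) follows from $\norm g_1\meg\norm f_1+\sum\norm{h_x}_1$.
\end{itemize}

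\emph{Step 3 ($L^\infty$ bound for $g$, the main obstacle).} Off $U$, $g=f$ and $\abs{f}\meg\cM f\meg\alpha$ a.e., by Lemma~\ref{lem:3b}(4). On $U$, $g=\sum_x\phi_x\,c_x$ with $c_x\coloneqq\int f\phi_x/\int\phi_x$. Since the $\phi_x$ sum to $1$ there, it suffices to show $\abs{c_x}\meg D^2\alpha$.
\begin{itemize}
\item[] \emph{Reduction.} We have $\phi_x\Meg D^{-5}\chi_{B(x,r(x))}$, but a factor $D^5$ here would spoil the constant. So I would bound directly: $\abs{c_x}\meg\int_{B(x,r(x))}\abs f/\int\phi_x$.
\item[] \emph{Key estimate.} Pick $y\in B(x,\kappa r(x))\setminus U$. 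Then $B(x,r(x))\subseteq B(y,(\kappa+1)r(x))$, and $(\cM f)(y)\meg\alpha$ gives $\int_{B(x,r(x))}\abs f\meg\alpha\,\mi(B(y,(\kappa+1)r(x)))$. By doubling at scales $\meg 3\kappa R$, this is comparable to $\mi(B(x,r(x)/3))$.
\item[] \emph{Control of $\int\phi_x$.} Because the balls $B(x',r(x')/3)$ are disjoint, one expects $\phi_x$ to be large on $B(x,r(x)/3)$, but overlapping balls still enter the denominator. The easy way out is to replace $\phi_x$ by a disjointified partition (e.g.\ $\chi$ of $B(x,r(x))$ minus earlier balls) that contains $B(x,r(x)/3)$. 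Then $\int\phi_x\Meg\mi(B(x,r(x)/3))$, and the count of doublings gives the constant $D^2$ (possibly with a larger power, which I would tolerate).
\end{itemize}
The constant bookkeeping in the choice of $r_0$ versus $\kappa$ and $3\kappa R$ is where I expect the most care to be needed.
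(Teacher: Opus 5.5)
There is a genuine gap in Step~3, and it is not just constant bookkeeping. You keep $U$ inside $B(E,R/2)$ by using a \emph{small} maximal radius $r_0\meg R/4$. But then a point $y\in B(x,\kappa r(x))\setminus U$ only satisfies $(\cM_{r_0}f)(y)\meg\alpha$, which controls averages over balls of radius at most $r_0$. The Whitney radii $r(x)=\tfrac{1}{2}d(x,X\setminus U)$ of Lemma~\ref{lem:2b} are dictated by the shape of $U$, not by $r_0$. A priori they are only bounded by $\tfrac{1}{2}\diam U$, and nothing in your argument gives $\kappa r(x)\meg r_0$ (let alone $(\kappa+1)r(x)\meg r_0$, which your ball $B(y,(\kappa+1)r(x))$ would need). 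For Whitney balls with $\kappa r(x)>r_0$ you therefore have no bound at all on $\dashint_{B(x,r(x))}\abs{f}\,\dd\mi$, hence none on $c_x$ or on $\norm{g}_{L^\infty(\mi;B)}$. Adjusting $r_0$ does not fix this by itself.

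The missing idea is to use the hypothesis $\alpha>D_{3\kappa R}^4\norm{f}_{L^1(\mi;B)}/\mi(E)$ at large scales; you only use it to show $U\neq X$.
\begin{itemize}
\item If $B(y,\rho)$ charges $f$ and $R/4<\rho\meg\kappa R$, it meets $E$. Then $E\subseteq B(y,(1+\kappa)R)\subseteq B(y,4R)$ and $\mi(B(y,\rho))\Meg D_{3\kappa R}^{-4}\mi(E)$, whence $\dashint_{B(y,\rho)}\abs{f}\,\dd\mi<\alpha$.
\item A ball of radius $\rho\meg\kappa R$ that charges $f$ and contains a point outside $B(E,R/2)$ must have $\rho>R/4$.
\end{itemize}
This is how the paper proceeds. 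It takes $U_\alpha=\Set{x\in X\colon(\cM_{\kappa R}f)(x)>\alpha}$ with the \emph{large} radius $\kappa R$, and gets $U_\alpha\subseteq B(E,R/2)$ from this estimate, not from smallness of the radius. Then $\diam U_\alpha<2R$ and all Whitney radii are $<R$. Hence $\dashint_{B(x,\kappa r(x))}\abs{f}\,\dd\mi\meg(\cM_{\kappa R}f)(y)\meg\alpha$ for $y\in B(x,\kappa r(x))\setminus U_\alpha$, using the uncentred ball $B(x,\kappa r(x))$ directly. Equivalently, the estimate shows $\Set{\cM_{\kappa R}f>\alpha}=\Set{\cM_{R/4}f>\alpha}$, so your construction works with $r_0=R/4$ once it is added. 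The paper's choice also makes Lemma~\ref{lem:3b}(2) applicable as stated, since $\diam B(E,R/2)<2R<6\kappa R$; by contrast, $B(E,2r_0)$ need not have diameter $<6r_0$.

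Some smaller points.
\begin{itemize}
\item Your disjointification ``$B(x,r(x))$ minus earlier balls'' need not contain $B(x,r(x)/3)$: earlier balls of full radius may cover it, and the piece may even be empty. You must first reserve the pairwise disjoint balls $B(x,r(x)/3)$ and then distribute the rest of $U$. With that, $\abs{c_x}\meg\alpha\,\mi(B(x,\kappa r(x)))/\mi(B(x,r(x)/3))\meg D_{3\kappa R}^4\alpha$, not $D_{3\kappa R}^2\alpha$.
\item Your worry about the overlap factor is well founded. The paper's $g=\sum_{x\in N}\bigl(\dashint_{B(x,r(x))}f\eta_x\,\dd\mi\bigr)\chi_{B(x,r(x))}$ on $U_\alpha$ is a sum of up to $D_{3\kappa R}^5$ coefficients, each bounded by $D_{3\kappa R}^2\alpha$. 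So its argument as written gives $D_{3\kappa R}^7\alpha$, and your convex-combination $g$ is the better choice.
\item For (4), use the disjointness of the $B(x,r(x)/3)$ and doubling, as the paper does. This gives $D_{3\kappa R}^6$ rather than the $D_{3\kappa R}^9$ that bounded overlap gives.
\item The paper's proof also produces only $N\subseteq U_\alpha$, not $N\subseteq E$. That is a slip in the statement, not something you need to fix.
\end{itemize}
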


This is a somewhat `local' Calder\'on--Zygmund decomposition. As one may expect, it only applies to functions with bounded support and may lead to suitable (but controlled) enlargements in the supports.

\begin{proof}
	Observe that, if $x\in X\setminus B(E,R/2)$ and if $y\in X$ and $r\in (0,\kappa R]$ are such that $x\in B(y,r)$ and $\dashint_{B(y,r)} \abs{f}\,\dd \mi>0$, then $r>R/4$ and $d(y,E)<r\meg \kappa R$, so that
    \begin{equation*}
        \mi(B(y,r))\Meg D_{3\kappa R}^{-4} \mi(B(y,(1+\kappa)R))\Meg D_{3\kappa R}^{-4}\mi(E),
    \end{equation*}
    since $(1+\kappa)R\meg 4R$.
    Consequently,
	\[
	\dashint_{B(y,r)} \abs{f}\,\dd \mi\meg \frac{\norm{f}_{L^1(\mi;B)}}{\mi(B(y,r))}\meg \frac{D_{3\kappa R}^4\norm{f}_{L^1(\mi;B)}}{\mi(E)} .
	\]
	We have thus proved that
    \begin{equation*}
        (\cM_{\kappa R} f)(x)\meg \frac{D_{3\kappa R}^4\norm{f}_{L^1(\mi;B)}}{\mi(E)}<\alpha
    \end{equation*}
    for every $x\in X\setminus B(E,R/2)$.
	It follows that the level set
    \begin{equation*}
        U_\alpha\coloneqq \Set{x\in X\colon (\cM_{\kappa R} f)(x)>\alpha}
    \end{equation*}
    is contained in $B(E,R/2)$.   In addition, by Lemma~\ref{lem:3b} applied to  the bounded open set $B(E,R/2)$ with $\diam(B(E,R/2))<2R$, we see that $U_\alpha $ is open and that
    \begin{equation*}
        \mi(U_\alpha)\meg \frac{D_{3\kappa R}^4 \norm{f}_{L^1(\mi;B)}}{\alpha}<\mi(E)\meg \mi(X),
    \end{equation*}
    so that $U_\alpha\neq X$. 
	Then, by Lemma~\ref{lem:2b} there are a (possibly empty) countable subset $N$ of $U_\alpha$ and a function $r\colon N\to (0,R]$ such that  $ \bigcup_{x\in N} B(x,r(x))=U_\alpha\subseteq B(E,R/2)$,  $\sum_{x\in N} \chi_{B(x,r(x))}\meg D_{3\kappa R}^5$ (whence (7)), the $B(x,r(x)/3)$, $x\in N$, are pairwise disjoint, and $B(x,\kappa r(x)) \not \subseteq U_\alpha$ for every $x\in U_\alpha$.
    
	Now, set
	\[
	\eta_x\coloneqq \frac{\chi_{B(x,r(x))}}{\sum_{y\in N}  \chi_{B(y,r(y))}}
	\]
	and
	\[
	h_x\coloneqq f \eta_x -\left(  \dashint_{B(x,r(x))} f \eta_x\,\dd \mi\right)  \chi_{B(x,r(x))}
	\]
	for every $x\in N$,  so that $\sum_{x\in N} \eta_x=\chi_{U_\alpha}$ pointwise, and
	\[
	g\coloneqq f \chi_{X\setminus U_\alpha} + \sum_{x\in N}\left(  \dashint_{B(x,r(x))} f \eta_x\,\dd \mi \right) \chi_{B(x,r(x))},
	\]
	so that (1) holds by contruction. Observe that (5) clearly holds, as well as (4), as
	\[
	\sum_{x\in N} \mi(B(x,r(x)))\meg D_{3\kappa R}^2\sum_{x\in N} \mi(B(x,r(x)/3))\meg D_{3\kappa R}^2 \mi(U_\alpha)\meg \frac{D_{3\kappa R}^6\norm{f}_{L^1(\mi;B)}}{\alpha}
	\]
	by Lemma~\ref{lem:3b}.
	Concerning (6), note that
	\[
	\sum_{x\in N} \norm{h_x}_{L^1(\mi;B)}\meg 2\sum_{x\in N} \norm{f \eta_x}_{L^1(\mi;B)}\meg 2  \norm{f}_{L^1(\mi;B)}
	\]
	since $\sum_{x\in N}\eta_x =\chi_{U_\alpha}$ (pointwise convergence); the other assertion is proved similarly. By the fact that $g=f-\sum_{x\in N} h_x$, (3) follows as well.
	Finally, notice that (2) is immediate for $\mi$-almost every $x\in X\setminus U_\alpha$, thanks to Lemma~\ref{lem:3b}. 
	Furthermore, for every $x\in N$,
	\[
	\dashint_{B(x,r(x))} \abs{f}\,\dd \mi \meg D_{3\kappa R}^2 \dashint_{B(x,\kappa r(x))}\abs{f}\,\dd \mi \meg D_{3\kappa R}^2 \alpha
	\]
	since $B(x,\kappa r(x)) \setminus U_\alpha \neq \emptyset$ and $\kappa r(x)\meg \kappa R$, so that 
    \begin{equation*}
        \dashint_{B(x,\kappa r(x))}\abs{f}\,\dd \mi \meg(\cM_{\kappa R} f)(y)\meg \alpha
    \end{equation*}
    for every $y\in B(x,\kappa r(x)) \setminus U_\alpha$. The assertion follows.
\end{proof}

\begin{deff}
	We define
	\[
	\phi(r,p)\coloneqq  \inf_{q\in (1,p)} \left( q'+\frac{1}{r/q-1} \right)^{(1/p-1/r)/(1-q/r)}
	\]
	for every $p\in (1,r]$, and we set $\phi(r,p)\coloneqq \phi(r',p')$ for every $p\in [r,\infty)$. 
\end{deff}

This definition is motivated by the following version of Marcinkiewicz's interpolation theorem.

\begin{lem}\label{lem:1}
	Let $(Y_j,\Mf_j,\nu_j)$, $j=1,2$, be two measure spaces ($\sigma$-finite or not), let $B_1,B_2$ be two Banach spaces, and let $V$ be a vector space of $\nu_1$-measurable mappings from $Y_1$ into $B_1$ such that $ f g\in V$ for every $f\in V$ and for every $g\in L^\infty(\nu_1)$. Let $T$ be a  linear operator from $V$ into the space of $\Mf_2$-measurable mappings from $Y_2$ into $B_2$.  Take $r\in (1,\infty]$, and assume that there are $A,B> 0$ such that
	\[
	\nu_2(\Set{y\in Y_2\colon \abs{(T f)(y)}>\lambda  })\meg \frac{A  \norm{f}_{L^1(\nu_1;B_1)}}{\lambda}
	\]
	for every $f\in V $ and for every $\lambda>0$, and such that
	\[
	\norm{T f}_{L^{r}(\nu_2;B_2)}\meg B \norm{f}_{L^r(\nu_1;B_1)}
	\]
	for every $f\in V $. Then, for every $p\in (1,r]$ and for every $f\in V$,
	\[
	\norm{T f}_{L^p(\nu_2;B_2)}\meg 2 \phi(r,p) A^{r'(1/p-1/r)} B^{r'/p'} \norm{f}_{L^p(\nu_1;B_1)}.
	\]
\end{lem}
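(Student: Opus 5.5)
The approach is the classical proof of Marcinkiewicz's theorem, with the splitting parameter optimised at the end; the infimum over $q\in(1,p)$ in $\phi(r,p)$ suggests a slightly lossy version. Fix $f\in V$ and $\lambda>0$, and split $f=f_1^\lambda+f_2^\lambda$ with $f_1^\lambda\coloneqq f\chi_{\{\abs{f}>c\lambda\}}$ and $f_2^\lambda\coloneqq f\chi_{\{\abs{f}\meg c\lambda\}}$, where $c>0$ is to be chosen. Both pieces lie in $V$, because the indicator functions belong to $L^\infty(\nu_1)$; this is exactly why the hypothesis on $V$ is there. By linearity,
\[
\{\abs{Tf}>\lambda\}\subseteq\{\abs{Tf_1^\lambda}>\lambda/2\}\cup\{\abs{Tf_2^\lambda}>\lambda/2\}.
\]
The weak type $(1,1)$ bound controls the first set by $2A\lambda^{-1}\int_{\abs{f}>c\lambda}\abs{f}\,\dd\nu_1$. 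Chebyshev and the strong $L^r$ bound control the second by $(2B/\lambda)^r\int_{\abs{f}\meg c\lambda}\abs{f}^r\,\dd\nu_1$ when $r<\infty$. When $r=\infty$, choose $c=1/(2B)$, so that the second set is negligible.

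Next, write $\norm{Tf}_p^p=p\int_0^\infty\lambda^{p-1}\nu_2(\{\abs{Tf}>\lambda\})\,\dd\lambda$, insert the two bounds, and apply Tonelli. This gives
\[
\frac{2A p}{(p-1)c^{p-1}}\int\abs{f}^p\quad\text{and}\quad\frac{p(2B)^r c^{p-r}}{r-p}\int\abs{f}^p .
\]
No $\sigma$-finiteness is needed here. The level sets of $\abs{f}$ are $\nu_1$-measurable, and where $\abs{f}>0$ the function is $\sigma$-finitely supported since $f\in L^p$; Tonelli is applied only on $\{f\neq0\}\times(0,\infty)$. Similarly, $\{\abs{Tf}>\lambda\}$ is measurable by hypothesis, and the layer-cake formula holds without $\sigma$-finiteness. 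Optimising in $c$ then gives a constant of the form $C(p,r)A^{r'(1/p-1/r)}B^{r'/p'}$ for $r<\infty$, after checking that the exponents match via $1/p=(1-\theta)+\theta/r$ with $\theta=r'/p'$. For $r=\infty$ it gives $A^{1/p}B^{1/p'}$.

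The main obstacle is matching the precise constant $2\phi(r,p)$ with its infimum over $q$. My plan for this is to interpolate in two steps. First apply the argument above to go from the pair $(1,r)$ to the strong type $(q,q)$ bound for an auxiliary $q\in(1,p)$. That constant is roughly $2\bigl(q'+\frac{1}{r/q-1}\bigr)^{1/q}$ after optimising $c$. Then pass from $q$ to $p$ using the Riesz--Thorin/Hölder-type convexity between $L^q$ and $L^r$. Strictly, this needs a strong-type interpolation that is valid for linear $T$ between Bochner spaces; I would use the real method once more, or observe directly that the exponent $(1/p-1/r)/(1-q/r)=\bigl((1/p-1/r)/(1/q-1/r)\bigr)/q$ is exactly the convex weight. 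The exponents of $A$ and $B$ should then recombine to $A^{r'(1/p-1/r)}B^{r'/p'}$, and taking the infimum over $q$ gives $\phi(r,p)$.

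I expect the hardest part to be the bookkeeping that keeps the leading factor exactly $2$ and the $q$-dependence exactly in the form of $\phi$. If the two-step route gives a slightly different constant, I would instead redo the single-step computation with $c$ chosen in terms of $q$ and accept the $\phi$ form.
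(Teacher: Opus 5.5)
Your proposal is correct and follows the paper's route. The paper applies Marcinkiewicz (with the splitting $f\chi_{\{\abs{f}>c\lambda\}}$, which is legitimate because $V$ is stable under multiplication by $L^\infty(\nu_1)$, and with the non-$\sigma$-finiteness handled as you do, via the $\sigma$-finite support of $f$) to get a strong $(q,q)$ bound $2\bigl(q'+\frac{1}{r/q-1}\bigr)^{1/q}A^{r'(1/q-1/r)}B^{r'/q'}$; it then uses Riesz--Thorin between $L^q$ and $L^r$, where the prefactor becomes $2^{1-\theta}\meg 2$ with $1-\theta=\frac{1/p-1/r}{1/q-1/r}$, and takes the infimum over $q$, which gives exactly $2\phi(r,p)A^{r'(1/p-1/r)}B^{r'/p'}$.

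Two small corrections. The second term of your single-step bound should carry $c^{r-p}$, not $c^{p-r}$. You should also commit to Riesz--Thorin for the second step: a second real-method step would lose the exact constant, and H\"older convexity alone does not bound an operator.
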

 
For the proof, one may use~\cite[Theorem 1.3.2]{GrafakosClassical} (extended with the same proof to the vector-valued case) to find an estimate for the $(q,q)$ bound for $T$, and then apply Riesz--Thorin interpolation between $L^q$ and $L^r$. By inspection of the proof one may see that~\cite[Theorem 1.3.2]{GrafakosClassical} may be applied to $\chi_E V$ for every $\sigma$-finite $E\in \Mf_1$ (with the same bounds). Since every element of $L^p(\nu_1) $, $p<\infty$, is concentrated on some $\sigma$-finite element of $\Mf_1$, the assertion follows.  In a similar way, one sees that Riesz--Thorin theorem may be applied to $V$.  

We collect also the following elementary remarks which may help estimating $\phi(r,p)$.
The latter observation is relevant when addressing the uniform boundedness of the constants appearing in Theorems~\ref{teo:1} and~\ref{teo:2}, and in Corollary~\ref{cor:1b}. 

\begin{oss}\label{oss:2}
	Clearly, $\phi(\infty,p)=p'^{1/p}$ for every $p\in (1,\infty)$. In addition,
    \begin{equation*}
        \phi(r,p)\meg \left( \frac{\sqrt r +1}{\sqrt r-1}\right)^{(1/p-1/r)/(1-1/\sqrt r)}
    \end{equation*}
    for every $\sqrt r\meg p\meg r$, while
    \begin{equation*}
        \phi(r,p)\meg \left( p'+\frac{1}{r/p-1} \right)^{1/p}
    \end{equation*}
    for $1<p\meg \sqrt r$.
\end{oss}

\begin{proof}
	The first assertion is clear. The second one follows taking $q\to\sqrt r^-$ and $q\to p^-$, respectively, in the definition of $\phi(r,p)$. To motivate these choices, observe that the function $q\mapsto q'+\frac{1}{r/q-1}$ is $\Meg 1$, decreasing on $(1,\sqrt r]$ and increasing on $[\sqrt r,p)$, and that the function $q\mapsto(1/p-1/r)/(1-q/r) $ is increasing on $(1,p)$.
\end{proof}

\begin{oss}
	Take $C_1,C_2>1$. There is a constant $C_3>0$ such that $\phi(r,p)\meg C_3$ for every $r,p\in [C_1,\infty]$ such that $p\meg \min(C_2 r, r/(1-C_2/\log r)_+)$.\footnote{When $r=\infty$, interpret this condition as $p\meg \infty$.}
\end{oss}

\begin{proof}
	The case $p\meg r$  follows from Remark~\ref{oss:2}. Now, assume that $ r< p  $ (so that $r<\infty$, hence also $p<\infty$),  and choose $q\in (1,p')$  in the definition of $\phi(r',p')=\phi(r,p)$ so that $q'=c r$ for some $c>C_1$ (this is possible since $c r> C_1 r\Meg p$). Then,
    \begin{equation*}
        \phi(r,p) \meg \left( c r +\frac{1}{r'/q-1} \right)^{(1/p'-1/r')/(1-q/r')}.
    \end{equation*}
	Observe that $1/p'-1/r'=1/r-1/p$,
    \begin{align*}
        r'/q-1
        &= r' (1-1/(c r))-1\\
        &=(c r-1)/(c r-c)-1
        =(c-1)/(c r-c),
    \end{align*}
    and
    \begin{align*}
        1-q/r'
        &=1-[c r/(c r-1)] [(r-1)/r]\\
        &=1-(c r-c)/(c r -1)=(c-1)/(c r-1),
    \end{align*}
    so that
	\[
	\phi(r,p)\meg \left( cr +c \frac{r-1}{c-1} \right)^{(1/r-1/p)(c r-1)/(c-1)}.
	\]
	Now,  $1/r-1/p\meg 1/r(1-1/C_2)$ if $r\meg \ee^{C_2^2/(C_2-1)}$ (since $p\meg C_2 r$), and $1/r-1/p\meg C_2/(r\log r)$ if $r\Meg \ee^{C_2^2/(C_2-1)}$ (since $1-C_2/\log r\Meg 1/C_2>0$). The assertion follows.
\end{proof}

We are now in a position to state a result on the continuity of vector-valued Calder\'on--Zygmund operators (cf.~\cite[Theorem 1.1]{Grafakos} for the doubling case).  Since our results are of a somewhat `local' nature, we initially consider `localized' operators, and then show how these `localized' operators may be patched together to obtain a `global' result, under suitable assumptions.

Take two Banach spaces $B_1,B_2$, $R>0$, $r\in (1,\infty]$,   a $\mi$-measurable subset $E$ of $X$ with $\diam(E)<R$,   a continuous linear mapping
\begin{equation*}
    T\colon \chi_{B(E,R/2)}L^r (\mi;B_1)\to  \chi_{E}L^r (\mi;B_2), 
\end{equation*}
and  $K\colon E\times B(E,R/2)\to \Lc(B_1;B_2)$  a  $(\mi\otimes \mi)$-measurable map.  
Consider the following conditions:
\begin{enumerate}
	\item[(i)$_R$] for every   open  subset $F$ of $B(E,R/2)$, $K$ is $(\mi\otimes \mi)$-integrable on   $ (E\setminus F)\times F$, and   
	\[
	(T f)(x)=\int_F K(x,y) f(y)\,\dd \mi(y)
	\]
	for  $\mi$-almost every $x\in E\setminus F$, for every $f\in \chi_F L^\infty(\mi;B_1)$;
	
	\item[(ii)$_R$]   there exists $A_R>0$ such that   $\norm{ T f }_{L^r(\mi;B_2)}\meg A_R \norm{f}_{L^r(\mi;B_1)}$ for every $f\in \chi_{B(E,R/2)} L^r(\mi;B_1)$;

	\item[(iii)$_R$] there is a constant $C_R>0$ such that 
	\[
	\int_{E\setminus  B(y, 2d(y,y') )} \abs{K(x,y)-K(x,y')}\,\dd \mi(x)\meg C_R
	\]
	for $\mi$-almost every $y,y'\in B(E,R/2)$ with $d(y,y')<R$. 
\end{enumerate}

\begin{teo}\label{teo:1} 
	Keep the above notation, take $\kappa>2$, assume that $D_{3\kappa R}<\infty$ and that \textnormal{(i)$_{R}$}, \textnormal{(ii)$_{R}$}, and \textnormal{(iii)$_{R}$} hold.
	The following holds
	\[
	\norm{Tf}_{L^p(\mi;B_2)}\meg 16 \,\phi(r,p)(D_{3\kappa R}^9A_{R}+C_{R}) \norm{f}_{L^p(\mi;B_1)}
	\]
	for every $p\in (1,r]$ and for every $f\in \chi_E L^r(\mi;B_1)$.
\end{teo}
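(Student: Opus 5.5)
The plan is the classical Calderón--Zygmund argument: prove a weak type $(1,1)$ estimate for $T$ on $\chi_E L^r(\mi;B_1)$ using the local decomposition of Lemma~\ref{lem:4b}, and then interpolate with (ii)$_R$ through Lemma~\ref{lem:1}. I will take $V=\chi_E L^r(\mi;B_1)\cap L^1(\mi;B_1)$, which is stable under multiplication by $L^\infty(\mi)$, and extend to general $f$ by density in the end. Since $T$ takes values in $\chi_E L^r$, only the level sets of $\abs{Tf}$ inside $E$ matter. The target is a bound
\[
\mi\bigl(\Set{x\in E\colon \abs{(Tf)(x)}>\lambda}\bigr)\meg \frac{c\,(D_{3\kappa R}^{9}A_R+C_R)\norm{f}_{L^1}}{\lambda}.
\]
If $\mi(E)=0$ there is nothing to prove. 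If $\lambda$ is so small that $\alpha:=\lambda/(\text{const})$ fails the threshold $\alpha>D_{3\kappa R}^4\norm{f}_{L^1}/\mi(E)$ of Lemma~\ref{lem:4b}, then the trivial bound $\mi(E)\meg D^4_{3\kappa R}\norm{f}_1/\alpha$ already gives the claim.

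For the remaining $\lambda$, I fix $\alpha=\lambda/\gamma$ with $\gamma$ to be optimized, and decompose $f=g+\sum_{x\in N}h_x$ as in Lemma~\ref{lem:4b}. All pieces are supported in $B(E,R/2)$, so $T$ is defined on them.

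\emph{Good part.} By (ii)$_R$ and Chebyshev,
\[
\mi(\abs{Tg}>\lambda/2)\meg (2A_R/\lambda)^r\norm{g}_r^r\meg (2A_R/\lambda)^r (D^2_{3\kappa R}\alpha)^{r-1}\,3\norm{f}_1 .
\]
For $r=\infty$ I instead choose $\gamma$ so that $A_R D^2\alpha\meg\lambda/2$, which makes this set empty.

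\emph{Bad part.} Set $\Omega=\bigcup_{x\in N}B(x,2r(x))$. Its measure is at most $D_{3\kappa R}\sum_x\mi(B(x,r(x)))\meg D^7\norm{f}_1/\alpha$ by (4). Outside $\Omega$ I use (i)$_R$ with the open set $F=B(x,r(x))$, together with the vanishing mean (5):
\[
(Th_x)(z)=\int \bigl(K(z,y)-K(z,x)\bigr)h_x(y)\,\dd\mi(y)
\]
for a.e.\ $z\in E\setminus B(x,2r(x))$. Since $d(y,x)<r(x)$, we have $E\setminus B(x,2r(x))\subseteq E\setminus B(x,2d(x,y))$. Integrating in $z$ and applying (iii)$_R$ gives $\int_{E\setminus\Omega}\abs{Th_x}\meg C_R\norm{h_x}_1$. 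Summing with (6) then bounds $\mi(E\setminus\Omega,\ \abs{T(\sum h_x)}>\lambda/2)$ by $4C_R\norm{f}_1/\lambda$.

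The main obstacle is justifying $T(\sum_x h_x)=\sum_x Th_x$ a.e.\ on $E\setminus\Omega$. The sum converges in $L^1$ but not necessarily in $L^r$, so the continuity of $T$ on $L^r$ does not directly apply. I will handle this as follows. Since $f\in L^r$, each $h_x$ and its partial sums are in $L^r$, and by (6) and the bounded overlap (7) the partial sums converge in $L^r$ to $f-g$. Hence $T$ of the partial sums converges in $L^r$, and a subsequence converges a.e. Two further technical points also need care. First, (iii)$_R$ holds only for a.e.\ $y,y'$, so I would need to replace $x$ by a suitable average or a generic point $x'$ near $x$, adjusting the radii by a harmless constant factor. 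Second, (i)$_R$ requires $h_x\in\chi_F L^\infty$, which holds when $f\in L^\infty$; so I would first prove the estimate for bounded $f$ and then pass to the limit by density.

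Finally I optimize $\gamma$, obtaining a weak $(1,1)$ constant of order $D^{9}_{3\kappa R}A_R+C_R$ after balancing the exponents of $A_R$ in the good-part term. Lemma~\ref{lem:1} then gives the $L^p$ bound with the factor $2\phi(r,p)A^{r'(1/p-1/r)}B^{r'/p'}$. Since both $A$ and $B$ are at most a constant multiple of $D^9A_R+C_R$, and the exponents satisfy $r'(1/p-1/r)+r'/p'=1$, this yields the stated constant $16\phi(r,p)(D^9_{3\kappa R}A_R+C_R)$.
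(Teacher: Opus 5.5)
Your overall scheme matches the paper's proof. You use the decomposition of Lemma~\ref{lem:4b} and the threshold $\alpha=\lambda/(2D_{3\kappa R}^2A_R)$; this is what your optimization of $\gamma$ produces, and it yields the weak type constant $6A_R+2D_{3\kappa R}^9A_R+4C_R\meg 8(D_{3\kappa R}^9A_R+C_R)$. The enlarged set $\bigcup_x B(x,2r(x))$, the cancellation plus (iii)$_R$ plus Tonelli, and the final appeal to Lemma~\ref{lem:1} with exponents summing to $1$ are also the paper's.

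The one step where you depart from the paper is the justification of $\abs{T(\sum_x h_x)}\meg\sum_x\abs{Th_x}$ on $E\setminus\Omega$. There your argument does not cover $r=\infty$, which the theorem includes and which you treat separately in the good part. For $r<\infty$ and bounded $f$ your claim holds: by (6) and (7),
$\norm{\sum_{x\notin N_n}h_x}_{L^r}^r\meg D_{3\kappa R}^{5(r-1)}(2\norm{f}_{L^\infty})^{r-1}\sum_{x\notin N_n}\norm{h_x}_{L^1}\to 0$,
and an a.e.\ convergent subsequence of $\sum_{x\in N_n}Th_x$ gives the inequality. For $r=\infty$, however, the partial sums do not converge in $L^\infty$ in general: the tail can keep sup norm bounded below when infinitely many $h_x$ have sup norm comparable to $\norm{f}_{L^\infty}$. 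Moreover, mere $L^\infty$-boundedness of $T$ gives no control of $T$ on a boundedly a.e.\ convergent sequence.

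The missing idea avoids continuity of $T$ altogether. It is how the paper proceeds, and it works for every $r\in(1,\infty]$. Apply (i)$_R$ once to the whole $h=\sum_x h_x$, which is bounded and concentrated on the single open set $F=\bigcup_x B(x,r(x))\subseteq B(E,R/2)$. Since $F\subseteq\Omega$, this gives $(Th)(z)=\int_F K(z,y)h(y)\,\dd\mu(y)$ for a.e.\ $z\in E\setminus\Omega$. For a.e.\ such $z$, $K(z,\cdot)$ is integrable on $F$, by the integrability clause of (i)$_R$ and Fubini. Together with $\sum_x\abs{h_x}\meg 2D_{3\kappa R}^5\norm{f}_{L^\infty}$, dominated convergence splits the integral into $\sum_x\int K(z,y)h_x(y)\,\dd\mu(y)$, and your cancellation and Tonelli estimate then applies term by term.

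Your other technical remark is correct: (iii)$_R$ is only an a.e.\ statement, a point the paper passes over by inserting the centre directly. Your generic-point fix costs nothing in the constant. Choosing $y_0\in B(x,\eps r(x))$ with $\eps\meg 2/9$, you must remove $\bigcup_x B(x,(2+3\eps)r(x))$. Since $\mu(B(x,(2+3\eps)r(x)))\meg D_{3\kappa R}^3\mu(B(x,r(x)/3))$ and the $B(x,r(x)/3)$ are disjoint, the bound $D_{3\kappa R}^7\norm{f}_{L^1}/\alpha$ survives.
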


Notice that, even though the assumptions are imposed on functions concentrated on $B(E,R/2)$, the conclusions only hold for functions concentrated on $E$.

\begin{proof} 
	By~Lemma~\ref{lem:1}, it will suffice to show that  the operator $T_E\colon f \mapsto T ( \chi_E f)$ is of weak type $(1,1)$ with norm $\meg 8(D^9_{3 \kappa R}A_{ R}+C_{ R})$.
	Take $f\in \chi_E L^\infty(\mi;B_1)$ and $\lambda>0$. Set $\alpha\coloneqq \lambda/(2D_{3\kappa R}^{2}A_{ R})$. If $\alpha\meg D_{3\kappa R}^4 \norm{f}_{L^1(\mi;B_1)}/\mi(E)$, then
	\[
	\mi\left(\Set{x\in X\colon \abs{(T_E f)(x)}>\lambda}\right)\meg \mi(E) \meg  2D_{3\kappa R}^{6}A_{ R}\frac{\norm{f}_{L^1(\mi;B_1)}}{\lambda}.
	\]
	If, otherwise, $ \alpha> D_{3\kappa R}^4\norm{f}_{L^1(\mi;B_1)}/\mi(E)$, then take $N$, $r$, $g$, and the $h_x$ as in  Lemma~\ref{lem:4b}.
	Observe that   $h=\sum_{x\in N} h_x$ and $g $ are bounded and concentrated on $B(E,R/2)$ (cf.~(1), (2), and (4) of Lemma~\ref{lem:4b}); more precisely, $\norm{g}_{L^\infty(\mi;B_1)}\meg \lambda/(2 A_{R})$. Observe that, since $T_E f=T f= T g+ T h$,
	\[
	\begin{split}
		\mi\left(\Set{x\in X\colon \abs{(T_E f)(x)}>\lambda}\right)\meg&  \mi\left(\Set{x\in E\colon \abs{(T  g)(x)}>\lambda/2}\right)\\
        &+\mi\left(\Set{x\in E\colon \abs{(T  h)(x)}>\lambda/2}\right)  .
	\end{split}
	\]
	If $r<\infty$, then
	\[
	\begin{split}
		\mi\left(\Set{x\in E\colon \abs{(T  g)(x)}>\lambda/2}\right) &\meg (\lambda/2)^{-r} \norm{ T g }_{L^r( \mi;B_2)}^r\\
		&\meg \left(\frac{2 A_{R}}{\lambda}\right)^r \norm{g}_{L^r(\mi;B_1)}^r\\
		&\meg \frac{2 A_{R}}{\lambda}  \norm{g}_{L^1(\mi;B_1)}\\
		&\meg \frac{6  A_{R}}{\lambda} \norm{f}_{L^1(\mi;B_1)},
	\end{split}
	\]
	thanks to the previous remarks and (3) of Lemma~\ref{lem:4b}.
	If $r=\infty$, then $\norm{ Tg}_{L^\infty(\mi;B_2)}\meg A_{R} \norm{g}_{L^\infty(\mi;B_1)}\meg  \lambda/2 $, so that
	\[
	\begin{split}
		\mi\left(\Set{x\in E\colon \abs{(T g)(x)}>\lambda/2}\right) =0.
	\end{split}
	\]
	Next, set $B'\coloneqq \bigcup_{x\in N} B(x,2r(x))$ and notice that
	\[
	\mi\left(\Set{x\in E\colon \abs{(T  h)(x)}>\lambda/2}\right)\meg   \mi(B')+ \mi\left(\Set{x\in E\setminus B'\colon \abs{(T h)(x)}>\lambda/2}\right) 
	\]
	and that
	\begin{align*}
	    \mi(B')\meg& \sum_{x\in N} \mi(B(x, 2r(x)))\\
        \meg& D_{3\kappa R}\sum_{x\in N} \mi(B(x, r(x)))\\
        \meg& 2A_{R} D_{3\kappa R}^{9} \norm{f}_{L^1(\mi;B)}/\lambda
	\end{align*}
	thanks to (4) of Lemma~\ref{lem:4b}.
	Finally, for  $\mi$-almost every $x\in E\setminus B'$,  
	\[
	\begin{split}
		\abs{T h(x)} &=\abs*{ \int_{B'} K(x,y) h(y)\,\dd \mi(y) }\\
		&=\abs*{ \sum_{x'\in N}\int_{B'} K(x,y) h_{x'}(y)\,\dd \mi(y) }\\
		&\meg \sum_{x'\in N} \abs*{\int_{B'} K(x,y) h_{x'}(y)\,\dd \mi(y)},
	\end{split}
	\]
	where the first equality follows from (i)$_{R}$, since $h\in \chi_{B'}L^\infty(\mi;B_1)$,  whereas the second equality follows from (i)$_{R}$ and the dominated convergence theorem, since $K(x,\,\cdot\,)$ is $\mi$-integrable on $B'$ for $\mi$-almost every $x\in E\setminus B'$  and $\sum_{x'\in N} \abs{h_{x'}(y)}\meg 2D_{3\kappa R}^5 \norm{f}_{L^\infty(\mi;B_1)}$ for $\mi$-almost every $y\in B'$ by (6) and (7) of Lemma~\ref{lem:4b}.
	In addition, since $h_{x'}$ is concentrated on $B(x', r(x'))$ and  $\int_X h_{x'}\,\dd \mi=0$ by (4) and (5) of Lemma~\ref{lem:4b},
	\[
	\int_{B'} K(x,y) h_{x'}(y)\,\dd \mi(y)= \int_{B(x', r(x'))} [K(x,y)-K(x,x')] h_{x'}(y)\,\dd \mi(y)
	\]
	for every $x'\in N$ and for   $\mi$-almost every $x\in X$. Thus, by Tonelli's theorem (which may be applied since $\mi$ is finite on $B(E,R/2)\supseteq E,B'$, thanks to Lemma~\ref{lem:0}), and by (6) of Lemma~\ref{lem:4b},
	\[
	\begin{split}
		&\mi\left(\Set{x\in E\setminus B'\colon \abs{(T h)(x)}>\lambda/2}\right) \meg \frac{2}{\lambda} \int_{E\setminus B'} \abs{T h}\,\dd \mi\\
		&\qquad \meg  \frac{2}{\lambda} \sum_{x'\in N} \int_{B(x',r(x'))} \int_{E\setminus B(x',2 r(x'))} \abs*{K(x,y)-K(x,x')} \,\dd \mi(x) \abs{h_{x'}(y)}\,\dd \mi(y)\\
		&\qquad\meg \frac{2C_{R}}{\lambda} \sum_{x'\in N} \int_X  \abs{h_{x'}}\,\dd \mi\\
		&\qquad \meg \frac{4C_{R}}{\lambda}\norm{f}_{L^1(\mi;B_1)}.
	\end{split}
	\]
	The assertion follows.
\end{proof}

Now, take $R>0$, $r\in [1,\infty)$, a $\mi$-measurable subset $E$ of $X$ with $\diam(E)<R$, a continuous linear mapping 
\begin{equation*}
    T\colon \chi_E L^r (\mi;B_1)\to  \chi_{B(E,R/2)}L^r (\mi;B_2),
\end{equation*}
and $K\colon B(E,R/2)\times E\to \Lc(B_1;B_2)$ a  $(\mi\otimes \mi)$-measurable map. 
 Consider the following conditions:
 \begin{enumerate}
 	\item[(i)$'_R$] for every  open  subset $F$ of $B(E,R/2)$, $K$ is $(\mi\otimes \mi)$-integrable on   $ F\times (E\setminus F)$, and   
 	\[
 	(T f)(x)=\int_{E\setminus F} K(x,y) f(y)\,\dd \mi(y)
 	\]
 	for $\mi$-almost every $x\in F$ and for every $f\in~\chi_{E\setminus  F} L^\infty(\mi;B_1)$;
 	
 	\item[(ii)$'_R$]   there exists $A_R>0$ such that   $\norm{ T f }_{L^r(\mi;B_2)}\meg A_R \norm{f}_{L^r(\mi;B_1)}$ for every $f\in\chi_{E} L^r(\mi;B_1)$;
 	
 	\item[(iii)$'_R$] there is a constant $C_R>0$ such that
 	\[
 	\int_{E\setminus  B(x, 2d(x,x') )} \abs{K(x,y)-K(x',y)}\,\dd \mi(y)\meg C_R
 	\]
 	for $\mi$-almost  every $x,x'\in B(E,R/2)$  with $d(x,x')<R$.
 \end{enumerate}

\begin{cor}\label{cor:1b}
	Keep the above notation, take $\kappa>2$, assume that $D_{3\kappa R}<\infty$ and that  
	\textnormal{(i)$'_{R}$}, \textnormal{(ii)$'_{R}$}, and \textnormal{(iii)$'_{R}$}  hold.
	The following holds,
	\[
	\norm{\chi_E T f}_{L^p(\mi;B_2)}\meg 16\, \phi(r,p) (D_{3\kappa R}^9A_{R}+C_{R}) \norm{f}_{L^p(\mi;B_1)}
	\]
	for every $p\in [r,\infty)$  and for every $f\in \chi_E L^\infty(\mi;B_1)$.
\end{cor}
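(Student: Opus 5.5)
The plan is to argue by duality and reduce to Theorem~\ref{teo:1} applied to the adjoint operator. Set $S\coloneqq \chi_E T$, viewed as an operator from $\chi_E L^r(\mi;B_1)$ into $\chi_E L^r(\mi;B_2)$, and fix $p\in[r,\infty)$. If $r=1$ I would treat the degenerate endpoint separately, or reduce to a slightly larger exponent via $\phi$. Then $p'\in(1,r']$, and for $f\in\chi_E L^\infty(\mi;B_1)$ I want to estimate
\begin{equation*}
\norm{\chi_E Tf}_{L^p(\mi;B_2)}=\sup\Set*{\abs*{\int_E \langle Tf, u\rangle\,\dd\mi}\colon u\in\chi_E L^\infty(\mi;B_2'),\ \norm{u}_{L^{p'}(\mi;B_2')}\meg 1}.
\end{equation*}
This is the norming property of $L^{p'}(\mi;B_2')$ for $L^p(\mi;B_2)$. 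It holds for strongly measurable functions, and may be restricted to bounded, finitely supported $u$ because $\mi(E)<\infty$ by Lemma~\ref{lem:0}. I flag this as a subtlety: $B_2'$ may lack the Radon--Nikodym property, so I only use the norming inequality and not the identification $L^p(\mi;B_2)'=L^{p'}(\mi;B_2')$.

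Next I would define the transposed operator $T'$ on $\chi_{B(E,R/2)}L^{r'}(\mi;B_2')$ by
\begin{equation*}
\int \langle Tf,u\rangle\,\dd\mi=\int\langle f, T'u\rangle\,\dd\mi .
\end{equation*}
The functional $f\mapsto\int\langle Tf,u\rangle\,\dd\mi$ is continuous on $\chi_E L^r(\mi;B_1)$ by (ii)$'_R$, with norm at most $A_R\norm{u}_{L^{r'}}$. The main obstacle is precisely here: such functionals need not be represented by a strongly measurable $B_1'$-valued function. To avoid this, I would not construct $T'$ as a function at all. Instead I would rerun the proof of Theorem~\ref{teo:1} directly on the bilinear form, or equivalently represent $T'u$ as a weak$^*$-measurable function with values in $B_1'$. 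Here one uses that the pairing against $f\in L^\infty\subseteq L^r$ only requires weak$^*$ measurability, together with Lemma~\ref{lem:1} applied in the weak formulation. This is where the classical proof needs amending.

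Granting this, the kernel of $T'$ is $K'(y,x)\coloneqq \trasp K(x,y)\in\Lc(B_2';B_1')$ on $E\times B(E,R/2)$. Condition (i)$'_R$ gives (i)$_R$ for $T'$ by Fubini, using the integrability of $K$ on $F\times(E\setminus F)$. Condition (ii)$'_R$ gives (ii)$_R$ with exponent $r'$ and the same constant $A_R$. Condition (iii)$'_R$ is literally (iii)$_R$ for $K'$, since $\abs{\trasp L}=\abs{L}$. Theorem~\ref{teo:1} then yields
\begin{equation*}
\norm{T'u}_{L^{p'}}\meg 16\,\phi(r',p')(D_{3\kappa R}^9A_R+C_R)\norm{u}_{L^{p'}}
\end{equation*}
for every $u\in\chi_E L^{r'}(\mi;B_2')$.

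Finally, Hölder's inequality gives $\abs{\int\langle f,T'u\rangle\,\dd\mi}\meg\norm{f}_{L^p}\norm{T'u}_{L^{p'}}$. Since $\phi(r',p')=\phi(r,p)$ by definition, the supremum above is bounded by the claimed constant times $\norm{f}_{L^p(\mi;B_1)}$.
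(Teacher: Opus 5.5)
Your skeleton matches the paper's: duality, the transposed kernel, Theorem~\ref{teo:1} at exponent $r'$, and $\phi(r',p')=\phi(r,p)$. But the step you write as ``granting this'' is the real content of the corollary, and neither of your workarounds closes it.

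Theorem~\ref{teo:1} needs a bounded operator with values in $\chi_E L^{r'}(\mi;B_1')$, i.e.\ strongly measurable functions. Its proof, and that of Lemma~\ref{lem:1}, works with level sets of the pointwise norm of the output:
\begin{itemize}
\item Chebyshev for $T'g$ via the $L^{r'}$ bound;
\item pointwise kernel estimates for $T'h$ off $B'$;
\item then Marcinkiewicz and Riesz--Thorin.
\end{itemize}
Suppose $T'u$ is only a weak$^*$-measurable $B_1'$-valued function. Such a representative is not unique. For non-separable $B_1$, its pointwise norm $y\mapsto\abs{(T'u)(y)}$ need not be $\mi$-measurable. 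Nor need it compute the norm of the functional $f\mapsto\int\langle Tf,u\rangle\,\dd\mi$: take $t\mapsto e_t\in\ell^2([0,1])$ on $[0,1]$, which represents the zero functional but has norm $1$ everywhere. So (ii)$_R$ for $T'$, the weak-type estimate and the interpolation step do not even make sense until redone in that setting. ``Rerunning the proof on the bilinear form'' is not an argument either, because that proof needs exactly this pointwise control of the output.

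The missing idea is the paper's reduction to finite-dimensional subspaces $F$ of $B_1$, with inclusion $\iota_F\colon F\to B_1$.
\begin{itemize}
\item Since $\mi(E)<\infty$ and $F'$ is finite-dimensional, the dual of $\chi_E L^r(\mi;F)$ is $\chi_E L^{r'}(\mi;F')$, also for $r=1$.
\item Hence the transpose $T'_F$ of the restriction of $T$ to $\chi_E L^r(\mi;F)$ is a genuine bounded operator $\chi_{B(E,R/2)}L^{r'}(\mi;B_2')\to\chi_E L^{r'}(\mi;F')$, with kernel $\trasp K_F(y,x)=\trasp{(K(x,y)\circ\iota_F)}$.
\item Conditions (i)$_R$--(iii)$_R$ hold for $T'_F$ with the same $A_R$ and $C_R$, since composing with $\iota_F$ does not increase operator norms. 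Theorem~\ref{teo:1} therefore gives bounds independent of $F$.
\item Your duality argument then yields the estimate for $f\in\chi_E L^\infty(\mi;F)$.
\item For general $f\in\chi_E L^\infty(\mi;B_1)$, approximate by simple functions in $L^p$. This is also convergence in $L^r$, since $p\Meg r$ and $\mi(E)<\infty$. Then use (ii)$'_R$, an a.e.\ convergent subsequence and Fatou's lemma.
\end{itemize}
With this route the case $r=1$ needs no separate treatment, since Theorem~\ref{teo:1} allows $r'=\infty$.
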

 
Notice that, in comparison with~\cite{Grafakos}, we have to proceed in a slightly different way, since it is in general false that the dual of $L^r(\mi;B_1)$ is $L^{r'}(\mi;B_1')$, as implicitly claimed in the cited reference. Fortunately, it suffices to prove the result replacing $B_1$ with its finite-dimensional vector subspaces to solve this issue. 
\begin{proof}
	Let $\Ff$ be the set of  finite-dimensional vector subspaces of $B_1$, and take $F\in \Ff$. Denote with $T_F$ the restriction of $T$ to $\chi_{E}L^r(\mi;F)$. Observe that  $\chi_{B(E,R/2)}L^{r'}(\mi;B_2')$ embeds canonically into the dual of $\chi_{B(E,R/2)}L^r(\mi;B_2)$, thanks to~\cite[Proposition 3 of Chapter IV, \S\ 6, No.\ 4]{BourbakiInt1}.  
	Therefore, the transpose of  the continuous linear mapping $T_F\colon \chi_{E}L^r(\mi;F)\to \chi_{B(E,R/2)}L^r(\mi;B_2)$  
	induces a   continuous  linear mapping $T'_F$ of $ \chi_{B(E,R/2)}L^{r'}(\mi;B_2')$ into  $ \chi_{E}L^{r'}(\mi;F')$. 
	
	Observe that, by (ii)$'_{R}$,   for every $f\in \chi_{B(E,R/2)} L^{r'}(\mi;B_2')$,
	\[
	\begin{split}
		\sup_{  \norm{g}_{L^r(\mi;F)}\meg 1 } \abs{\langle \chi_{E} g ,   T'_F f \rangle}=\sup_{  \norm{g}_{L^r(\mi;F)}\meg 1}  \abs{\langle T(\chi_E g) ,  f \rangle}\meg A_{R} \norm{f}_{L^{r'}(\mi;B_2')}
	\end{split}
	\]
	thanks to~\cite[Proposition 3 of Chapter IV, \S\ 6, No.\ 4]{BourbakiInt1}.
	In particular,    the analogue of (ii)$_{R}$,   with $T$ and $r$  replaced by $ T_F'$ and $r'$, respectively,   holds. 
	Next, denote with $\trasp K_F(y,x)$ the transpose of the composite of the canonical (isometric) inclusion $F\to B_1$ with $K(x,y)$, so that $\trasp K_F\colon E\times B(E,R/2)\to \Lc(B_2',F')$ is $(\mi \otimes \mi)$-measurable. Take an open  subset $E'$ of $B(E,R/2)$, $f\in \chi_{E'} L^\infty(\mi;B_2')$ and   $g\in \chi_{E\setminus E' }L^{\infty}(\mi;F )$, and observe that 
	\[
	\begin{split}
		\langle g ,  T'_F f\rangle&=\langle T_F g, f \rangle\\
			&=\int_{E'} \bigg\langle \int_{E\setminus E'} K(x,y) g (y)\,\dd \mi(y), f(x)\bigg\rangle\,\dd \mi(x)\\
			&=\int_{E'} \int_{E\setminus E'} \bigg\langle g (y), \trasp K(y,x) f(x)\bigg\rangle\,\dd \mi(y)\,\dd \mi(x)\\
			&=\int_{ E\setminus E'} \bigg\langle   g (y), \int_{E'} \trasp K_F(y,x) f(x)\,\dd \mi(x)\bigg\rangle\,\dd  \mi(y)
	\end{split}
	\]
	by Fubini's theorem.  Consequently,
	\[
	(T'_F f)(y)=\int_{E'} \trasp K_F(y,x) f(x)\,\dd \mi(x)
	\]
	for $\mi$-almost every $y\in E\setminus E'$, so that the analogue of (i)$_{R}$, with $T$ and $K$  replaced by $  T'_{F}$ and $\trasp K_F$, respectively, holds. In addition, it is clear that the analogue  of   (iii)$_{R}$, with   $K$  and $C_{R}$ replaced by   $\trasp K_F$  and $C_{R}'$, respectively, also holds true.
	Thus, Theorem~\ref{teo:1} shows that
	\[
	\norm{T'_{F} f}_{L^p(\mi;F')}\meg 16\, \phi(r,p) (D_{3\kappa R}^9 A_{R}+C_{R}) \norm{f}_{L^p(\mi;B_2')}
	\]
	for every $p\in (1,r']$, for every $f\in \chi_{E}  L^\infty(\mi;B_2')$, and for every $F\in \Ff$. Then, for every $p\in [r,\infty)$, for every $f\in \chi_E L^\infty(\mi;F')$, and for every $g\in~\chi_E L^{\infty}(\mi; B_2')$,
	\begin{align*}
	    \abs{\langle T f, g\rangle}
        &=\abs{\langle T_{F} f,g \rangle}
        =\abs{\langle f,  T'_{ F} g\rangle}\\
        &\meg 16\, \phi(r,p)  (D_{3\kappa R}^9 A_{R}+C_{R}) \norm{f}_{L^p(\mi;B_1)} \norm{g}_{L^{p'}(\mi;B_2')}.
	\end{align*}
	Using the fact that $\sum_{F\in \Ff} \chi_E L^\infty(\mi;F)$ is dense in $\chi_E L^\infty(\mi;B_1)$ and~\cite[Proposition 3 of Chapter IV, \S\ 6, No.\ 4]{BourbakiInt1}, we then deduce that
	\[
	\norm{\chi_E T f}_{L^p(\mi;B_2)}\meg 16\, \phi(r,p)  (D_{3\kappa R}^9 A_{R}+C_{R}) \norm{f}_{L^p(\mi;B_1)} 
	\]
	for every $f\in \chi_E L^\infty(\mi;B_1)$. 
\end{proof}

We now show how the previous results may be `patched together' in order to get a `global' result.

\begin{deff}
	Define $L^\infty_{c}(\mi;B_1)$ as the space of elements of $L^\infty(\mi;B_1)$ with compact support. 
\end{deff}

If $D_R<\infty$, then $L^\infty_{c}(\mi;B_1)$ may be characterized as the space of elements of $L^\infty(\mi;B_1)$ which are concentrated in a finite union of $\mi$-measurable sets with diameter $<2R$ (since these sets are necessarily precompact).

\begin{cor}\label{cor:2}
	Take $\kappa>2$, $r\in (1,\infty]$ (resp.\ $r\in [1,\infty)$) and $R>0$, and assume that $D_{3 \kappa R}<\infty$.
	Let $T$ be a linear operator from $L^\infty_{c}(\mi;B_1)$ into the space of $\mi$-measurable functions from $X$ into $B_2$, and let $K\colon X\times X\to \Lc(B_1;B_2)$ be a a $(\mi\otimes \mi)$-measurable mapping such that the following hold:
	\begin{enumerate}
		\item[\textnormal{(a)}] for every $\mi$-measurable subset $E$ of $X$ with $\diam(E)<R$, the mapping $f\mapsto \chi_E T(\chi_{B(E,R/2)} f)$ and the restriction of $K$ to $E\times B(E,R/2)$ satisfy (i)$_R$, (ii)$_R$, and (iii)$_R$ (resp.\ the mapping $f\mapsto \chi_{B(E,R/2)}T(\chi_{E} f)$ and the restriction of $K$ to $B(E,R/2)\times E$ satisfy (i)$'_R$, (ii)$'_R$, and (iii)$'_R$);
		
		\item[\textnormal{(b)}] $(T f)(x)=T(\chi_{\overline B(E,R/3)}f)(x)$ for  $\mi$-almost every $x\in E$, for every $f\in L^\infty_c(\mi;B_1)$, and for every $\mi$-measurable subset $E$ of $X$ with $\diam(E)<R/4$.
	\end{enumerate}
	Then, 
	\[
	\norm{T f}_{L^p(\mi;B_2)}\meg 16\, \phi(r,p)D_R^{5/p}(D_{3\kappa R}^9A_{R}+C_{R}) \norm{f}_{L^p(\mi;B_1)}
	\]
	for every $p\in (1,r]$ (resp.\ for every $p\in [r,\infty)$) and for every $f\in L^\infty_{c}(\mi;B_1)$.
\end{cor}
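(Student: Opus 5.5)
The plan is to cover $X$ by small pieces on which the local results already proved apply, and then sum the local estimates using bounded overlap; the factor $D_R^{5/p}$ will come from that overlap. We take a maximal $R/8$-separated family, or apply Lemma~\ref{lem:1b}, to obtain a partition of $X$ into $\mi$-measurable sets $E_j$ of diameter $<R/4$. For instance, choose $N$ maximal such that the balls $B(x,R/16)$, $x\in N$, are pairwise disjoint; then the balls $B(x,R/8)$, $x\in N$, cover $X$, and we let $E_j$ be a Borel disjointification of $B(x_j,R/8)$. Fix $f\in L^\infty_c(\mi;B_1)$. Since $f$ has compact support, only finitely many (or at most countably many) $E_j$ meet the relevant region, and $\norm{Tf}_{L^p}^p=\sum_j\norm{\chi_{E_j}Tf}_{L^p}^p$.

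For each $j$ we apply hypothesis (b) with $E=E_j$, which gives $\chi_{E_j}Tf=\chi_{E_j}T(\chi_{\overline B(E_j,R/3)}f)$. Set $E_j^*\coloneqq\overline B(E_j,R/3)$; this set has diameter $\meg R/4+2R/3<R$. In the case $p\in(1,r]$, apply Theorem~\ref{teo:1} to the set $E_j^*$ and the operator $T_j\colon g\mapsto\chi_{E_j^*}T(\chi_{B(E_j^*,R/2)}g)$, which satisfies (i)$_R$--(iii)$_R$ by (a). Since $\chi_{E_j^*}f\in\chi_{E_j^*}L^r$, and since $\chi_{E_j}\chi_{E_j^*}=\chi_{E_j}$, we get
\[
\norm{\chi_{E_j}Tf}_{L^p}\meg\norm{T_j(\chi_{E_j^*}f)}_{L^p}\meg 16\,\phi(r,p)(D_{3\kappa R}^9A_R+C_R)\norm{\chi_{E_j^*}f}_{L^p}.
\]
In the dual case $p\in[r,\infty)$, we use Corollary~\ref{cor:1b} instead, with $E=E_j^*$ and the operator $g\mapsto\chi_{B(E_j^*,R/2)}T(\chi_{E_j^*}g)$. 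This yields a bound on $\chi_{E_j^*}T(\chi_{E_j^*}f)$, and multiplying by $\chi_{E_j}$ gives the same estimate.

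Summing the $p$-th powers gives
\[
\norm{Tf}_{L^p}^p\meg C^p\int\Bigl(\sum_j\chi_{E_j^*}\Bigr)\abs{f}^p\,\dd\mi,
\qquad C=16\,\phi(r,p)(D_{3\kappa R}^9A_R+C_R),
\]
so it remains to show $\sum_j\chi_{E_j^*}\meg D_R^5$. This overlap count is the main obstacle, since the exponent must be exactly $5$. Fix $y$ and let $J_y$ be the set of indices $j$ with $y\in E_j^*$. For such $j$ we have $d(y,x_j)<R/8+R/3<R/2$, so the disjoint balls $B(x_j,R/16)$ all lie in $B(y,R/2+R/16)$. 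Comparing measures through doubling, as in the proof of Lemma~\ref{lem:2b}, gives $\card(J_y)\meg D_R^{k}$ for some $k$ computed from the chain $B(x_j,R/16)\subseteq B(y,\cdot)\subseteq B(x_j,\cdot)$, using only radii $\meg R$. If the naive count produces a larger power than $5$, I would tune the radii of the partition: use balls of radius $R/12$ for $E_j$, and use the flexibility in (b), which only needs $\diam E<R/4$. Alternatively, I would take the $E_j$ from the covering of Lemma~\ref{lem:1b} applied with a suitable constant radius, whose disjointness and cover factor $3$ match the count $D_R^5$ obtained there. Once the overlap is bounded by $D_R^5$, taking $p$-th roots yields the factor $D_R^{5/p}$ and concludes the proof.
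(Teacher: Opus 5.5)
Your proposal is correct and follows the paper's proof essentially step for step. The steps are the same: a maximal $R/8$-separated family, localization through (b), Theorem~\ref{teo:1} (resp.\ Corollary~\ref{cor:1b}) applied to an enlarged set of diameter $<R$, and summation of $p$-th powers using a $D_R^5$ overlap bound proved as in Lemma~\ref{lem:2b}. The only difference is cosmetic: the paper works directly with the covering balls $B(x_j,R/8)$ and the enlarged balls $B(x_j,R(1/3+1/8))$ instead of a disjointified partition.

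The overlap count you flagged needs no tuning of radii. If $y\in E_j^*$ then $d(y,x_j)<R/2$, so $B(x_j,R/16)\subseteq B(y,9R/16)\subseteq B(x_j,2R)$. Five doublings at radii $\meg R$, together with $0<\mi(B(y,9R/16))<\infty$ from Lemma~\ref{lem:0}, then give $\card(J_y)\meg D_R^5$.
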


Notice that condition (b) holds, for example, when $T$ is `globally' defined by the integral kernel $K$, and $\supp K\subseteq \Set{(x,y)\in X\times X\colon d(x,y)\meg R/3}$.

\begin{proof}
	Let $(x_j)_{j\in J}$ be a maximal $(R/8)$-separated family of elements of $X$, so that the $B(x_j, R/16)$ are pairwise disjoint, while the $B(x_j, R/8)$ cover $X$. Then, applying Theorem~\ref{teo:1} (resp.\ Corollary~\ref{cor:1b}) to the   set $E=B(x_j,R(1/3+1/8))$, we see that
	\[
	\begin{split}
		\norm{\chi_{B(x_j,R/8)} T f}_{L^p(\mi;B_2)}&=\norm{\chi_{B(x_j,R/8)} T (\chi_{B(x_j,R(1/3+1/8))}f)}_{L^p(\mi;B_2)}\\
		&\meg \norm{\chi_{B(x_j,R(1/3+1/8))} T (\chi_{B(x_j,R(1/3+1/8))}f)}_{L^p(\mi;B_2)}\\
		&\meg 16\, \phi(r,p)(D_{3\kappa R}^9A_{R}+C_{R}) \norm{\chi_{B(x_j,R/2)}f}_{L^p(\mi;B_1)}
	\end{split}
	\]
	for every $j\in J$. If $p=\infty$, this leads to the conclusion since $\mi$ is inner regular. If, otherwise, $p<\infty$, then summing up all these inequalities\footnote{Cf.~\cite[Proposition 8 of Chapter IV, \S\ 1, No.\ 2]{BourbakiInt1} for the case in which $J$ is uncountable. Essentially, one should observe that $\chi_{B(x_j,R/2)}T f$ does not vanish $\mi$-almost everywhere only for  countably many $j$ and use the inner regularity of $\mi$.} raised to the power $p$, and taking into account the fact that $\sum_{j\in J} \chi_{B(x_j,R/2)}\meg D_R^5$ (which may be proved essentially as in the proof of Lemma~\ref{lem:2b}), we get to the conclusion.
\end{proof}

We may now proceed to show the vector-valued boundedness of the maximal function $\cM_R$. For technical reasons, we prefer to state it for the \emph{centred} Hardy--Littlewood maximal function $\widetilde \cM_{2R}$ instead.

\begin{teo}\label{teo:2} 
	Take $R>0$ such that $D_{R}<\infty$, $k\in\N $, $p_1,\dots, p_{k+1}\in (1,\infty)$, and $\sigma$-finite measure spaces $(Y_j,\Mf_j,\nu_j)$, $j=1,\dots, k$.
	Then, 
	\begin{align*}
	    &\norm*{[\widetilde\cM_{2R}f(y,\,\cdot\,)](x)}_{L^{p_1,\dots,p_{k+1}}_{(y,x)}(\nu_1,\dots, \nu_k,\mi)}\\
        &\qquad\meg (1+16^{k+1}) D_{R}^{25+16k}  p_1'^{1/p_1} \prod_{j=1}^{k} \phi(p_j,p_{j+1})   \norm{f}_{L^{p_1,\dots,p_{k+1}} (\nu_1,\dots, \nu_k,\mi)}.
	\end{align*}
	for every $f\in L^{p_1,\dots,  p_{k+1}}(\nu_1,\dots, \nu_k,\mi)$.
\end{teo}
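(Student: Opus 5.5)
Our approach is induction on $k$. At each step we view the maximal operator as a vector-valued Calder\'on--Zygmund operator and apply Theorem~\ref{teo:1} and Corollary~\ref{cor:1b}, in the localized form of Corollary~\ref{cor:2}, with $B_1$ a mixed-norm space and $B_2=B_1$. There is one preliminary reduction. Fix a maximal $(R/8)$-separated family $(x_j)$ with bounded overlap $\sum_j\chi_{B(x_j,R/2)}\meg D_R^5$, as in the proof of Corollary~\ref{cor:2}. For $x\in B(x_j,R/8)$ and $r\meg 2R$ the ball $B(x,r)$ may be large, but we may replace $\widetilde\cM_{2R}$ by a truncated operator with smaller radii: for $r\in[R/16,2R]$ doubling at scale $\meg R$ gives $\dashint_{B(x,r)}\abs{f}\meg D_R^{c}\,\dashint_{B(x,R/16)}(\ldots)$. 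More precisely, that part is controlled pointwise by $D_R^{c}\sum_{j'}\chi_{B(x_{j'},R)}(x)\,\mi(B(x_{j'},R))^{-1}\int_{B(x_{j'},3R)}\abs{f}$. This is a sum of at most $D_R^{c}$ averaging operators, each bounded on mixed norms by Minkowski/H\"older; it contributes the constant ``$1$'' in $(1+16^{k+1})$. So the main task is the maximal operator with radii $\meg R/16$, which is local.

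For the small radii part we linearize in the standard way. The key object is the operator $f\mapsto (\dashint_{B(x,r)}f\,\dd\mi)_{r\in(0,R/16]\cap\Q}$, regarded as taking values in $\ell^\infty$, or in $L^\infty$ over a countable set of radii. It has the kernel $K(x,y)=\big(\mi(B(x,r))^{-1}\chi_{B(x,r)}(y)\big)_r$, and each component is nonnegative. For $k=0$ the claim is just Lemma~\ref{lem:3b}(3), patched over the balls $B(x_j,R/8)$, which yields $p_1'^{1/p_1}$ times powers of $D_R$. For the inductive step we work with scalar nonnegative $f$ and the space $B_1=L^{p_2,\dots,p_{k+1}}$, using the positivity trick: $\sup_r\dashint\abs{f(y,\cdot)}$ is pointwise dominated by the $\ell^\infty$-norm of a linear operator applied to $\abs f$. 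Then (ii)$_R$ with $r=p_1$ is the induction hypothesis transferred through Fubini. For the step from $p_j$ to $p_{j+1}$, the order of induction matters: we induct from the innermost variable, so the new exponent arises through Lemma~\ref{lem:1}, and this produces the factor $\phi(p_j,p_{j+1})$. We use Theorem~\ref{teo:1} when $p_{j+1}\meg p_j$ and Corollary~\ref{cor:1b} when $p_{j+1}\Meg p_j$. Each application costs a factor $16D^9$ from the theorem and $D_R^{5/p}$ from the patching of Corollary~\ref{cor:2}; this accounts for $16^{k+1}$ and the exponent $25+16k$.

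The main obstacle is verifying the Hörmander condition (iii)$_R$, respectively (iii)$'_R$, for the $\ell^\infty$-valued kernel without the global doubling and annular decay available in the classical case. The quantity to control is
\begin{equation*}
\int_{E\setminus B(y,2d(y,y'))}\sup_r\abs*{\frac{\chi_{B(x,r)}(y)-\chi_{B(x,r)}(y')}{\mi(B(x,r))}}\,\dd\mi(x).
\end{equation*}
Smooth kernels would make this easy, so the first thing I would try is to replace sharp averages by smooth ones, $\phi(d(x,y)/r)$ with $\phi$ a Lipschitz bump. Sharp averages are dominated by smooth ones with radius $2r$, at the cost of a factor $D_R$. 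For smooth averages the difference is bounded by $\frac{d(y,y')}{r\,\mi(B(x,r))}$ on $d(x,y)\sim r$ with $r\Meg d(y,y')$. Summing dyadically in $r$ gives $\sum_\ell 2^{-\ell}\,\mi(B(y,2^{\ell+1}d))/\mi(B(x,2^\ell d))\lesssim D_R^{2}$, and only scales $\meg R$ are involved because the radii were truncated. This is also where measurability issues (uncountable $J$, non-$\sigma$-finite $\mi$) must be handled as in the footnote to Corollary~\ref{cor:2}. Once (iii) holds with $C_R\lesssim D_R^{c}$, tracking the constants through the induction gives the stated bound.
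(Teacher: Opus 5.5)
Your architecture is the paper's. You split off the radii comparable to $R$ with a positive averaging operator (the paper's $\cM'$, handled by Schur's lemma). You linearize the small radii as an $\ell^\infty$-valued integral operator with a smoothed kernel. You then induct from the innermost variable through Corollary~\ref{cor:2}, getting (ii) at $r=p_{k'+1}$ from the previous step by Fubini and the factor $\phi(p_{k'+1},p)$ by interpolation.

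The gap is in the kernel. Whenever the next exponent is larger you invoke Corollary~\ref{cor:1b}, whose hypothesis (iii)$'_R$ requires regularity of $K$ in the \emph{first} variable, but you only estimate differences in $y$. For $K_r(x,y)=\phi(d(x,y)/r)/\mi(B(x,r))$, the $x$-difference contains the term $\phi(d(x',y)/r)\,\abs{\mi(B(x,r))^{-1}-\mi(B(x',r))^{-1}}$, and smoothing the cut-off does nothing to it. The map $x\mapsto\mi(B(x,r))$ is merely lower semicontinuous. Without annular decay (which doubling provides in length spaces, but not in general), its relative oscillation over distances $\delta\ll r$ can be of order one at every dyadic scale.

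For instance, take the cone $(Z\times[0,1])/(Z\times\{0\})$ over the Cantor set $Z$, with $d((z,s),(z',s'))=\max(\abs{s-s'},\min(s,s')\,d_Z(z,z'))$ and $\mi=\mi_Z\otimes\dd s$. This is a compact, connected, doubling space. There $\mi(B((z,t),r))$ drops by a fixed fraction each time $t$ increases past $r$ times a power of $2$ (for $r<t/4$). Consequently the (iii)$'$ integral is $\gtrsim\log(t/d(x,x'))$ on sets of pairs of positive measure. So no constant $C_R$ exists for your operator, and Corollary~\ref{cor:1b} cannot be applied to it.

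The paper's Lemma~\ref{lem:5b} is exactly the missing ingredient. Following M\"uller, it normalizes by the smoothed volume $T_r1(x)=\int_X h(d(x,z)/r)\,\dd\mi(z)$, which is Lipschitz at scale $r$, and then symmetrizes. The resulting kernel $S_r$ is symmetric, Lipschitz in both variables, and bounded below by $\chi_{B(x,r/2)}(y)/(D_{4R}^5V_r(x))$. Hence (iii) and (iii)$'$ coincide, and a single dyadic computation gives $C_R\lesssim D_R^{11}$ for both Theorem~\ref{teo:1} and Corollary~\ref{cor:1b}.

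Within your scheme, a cheaper repair would be to use the kernel $\phi(d(x,y)/r)/\mi(B(y,r))$ in the Corollary~\ref{cor:1b} direction. It is Lipschitz in $x$ and still pointwise comparable to $\widetilde\cM$ (at comparable radii) up to powers of $D_R$, so you could run the induction on $\widetilde\cM$ itself.

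Two smaller points:
\begin{enumerate}
\item Use finitely many radii $J$, so that $K_J$ is continuous into $\ell^\infty(J)$ and hence strongly measurable, as the Bochner framework of Theorem~\ref{teo:1} requires. Then pass to the supremum over $J$ by monotone convergence.
\item $B_2$ must be the $\ell^\infty(J)$-valued version of $B_1$, not $B_1$ itself.
\end{enumerate}
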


We define $L^{p_1,\cdots,p_{k+1}}(\nu_1,\dots, \nu_k,\mi)$ as the space of $(\nu_1\otimes\cdots \otimes \nu_k\otimes \mi)$-measurable functions $f\colon Y_1\times \cdots \times Y_k\times X\to \C$ such that\footnote{We observe explicitly the $\mi$ is the only measure which is not required to be $\sigma$-finite. This is due to the fact that it is the `last' one and also inner regular: measurability is then `local' and the integrals on $X$ are the limits of the integrals on the compact subsets of $X$ in a suitable sense.}
\[
\left(\int_X \left( \int_{Y_k}\cdots \left(\int_{Y_1} \abs{f(y_1,\dots, y_k,x)}^{p_1}\dd \nu_1(y_1)\right)^{p_2/p_1}  \cdots  \dd \nu_k(y_k) \right)^{p_{k+1}/p_k} \dd \mi(x) \right)^{1/p_{k+1}} 
\]
is finite.
We remark explicitly that in the statement we wrote
\begin{equation*}
    \norm*{\big(\widetilde\cM_{2R} f(y,\,\cdot\,)\big)(x)}_{L^{p_1,\dots,p_{k+1}}_{(y,x)}(\nu_1,\dots, \nu_k,\mi)}
\end{equation*}
instead of
\begin{equation*}
    \norm*{ (y_1,\dots,y_k,x)\mapsto \big(\widetilde\cM_{2R} f(y_1,\dots,y_k,\,\cdot\,)\big)(x)  }_{L^{p_1,\dots,p_{k+1}} (\nu_1,\dots, \nu_k,\mi)}
\end{equation*}
for simplicity.
We recall a basic result which will be used repeatedly in the proof; we consider the case $k=1$ for simplicity.

\begin{lem}
	Take $p,q\in [1,\infty)$, a Banach space $B$, and a $\sigma$-finite measure space $(Y,\Mf,\nu)$. Then, the mapping
	\[
	L^{p,q}(\nu,\mi;B)\ni f\mapsto [x\mapsto f(\,\cdot\,,x)]\in L^q(\mi;L^p(\nu;B))
	\]
	is an isometric isomorphism.
\end{lem}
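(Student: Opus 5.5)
The statement is the standard identification of mixed-norm spaces with Bochner spaces. I will first check that the map is well defined, then that it is isometric, and finally that it is onto.

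\emph{Well-definedness.} Take $f\in L^{p,q}(\nu,\mi;B)$, which is $(\nu\otimes\mi)$-measurable. I need $x\mapsto f(\,\cdot\,,x)$ to be $\mi$-measurable as an $L^p(\nu;B)$-valued map, in the sense recalled after the definition of $\cM_R$. Since measurability on $X$ is local, I fix a compact $K\subseteq X$, which has finite measure. On $Y\times K$ the measure $\nu\otimes\mi$ is $\sigma$-finite, and $\chi_{Y\times K}f$ can be approximated in the mixed norm by finite sums $\sum_i \chi_{A_i\times C_i}\, b_i$. Here $A_i\in\Mf$ has finite measure, $C_i\subseteq K$ is compact and $b_i\in B$. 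To justify this I use density of such simple tensors, reducing first to bounded $f$ supported in $Y_0\times K$ with $\nu(Y_0)<\infty$, and then using the Lebesgue dominated convergence theorem in both variables.

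Each $\sum_i \chi_{A_i\times C_i}\, b_i$ corresponds to the $L^p(\nu;B)$-valued simple function $x\mapsto\sum_i \chi_{C_i}(x)\,\chi_{A_i}b_i$, which is $\mi$-measurable. The images of the approximants form a Cauchy sequence in $L^q(\mi;L^p(\nu;B))$, and a subsequence converges $\mi$-almost everywhere. By Fubini--Tonelli, for $\mi$-almost every $x$ the sections converge in $L^p(\nu;B)$ to $f(\,\cdot\,,x)$. Hence $x\mapsto f(\,\cdot\,,x)$ is an almost-everywhere limit of measurable maps and is therefore $\mi$-measurable.

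\emph{Isometry.} Once measurability is known, the isometry is immediate. By Tonelli, $x\mapsto\norm{f(\,\cdot\,,x)}_{L^p(\nu;B)}$ is exactly the inner norm in the definition of $\norm{f}_{L^{p,q}}$. In particular, the map is injective on equivalence classes.

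\emph{Surjectivity.} Since the map is isometric and its domain is complete, the image is closed. It therefore suffices to show the image is dense. Simple functions $\sum_i \chi_{C_i}\,g_i$, with $C_i$ compact and $g_i\in L^p(\nu;B)$, are dense in $L^q(\mi;L^p(\nu;B))$ because $q<\infty$. Each $g_i$ is in turn approximable by simple functions $\sum_l \chi_{A_l}b_l$. The resulting functions $\sum_{i,l}\chi_{A_l\times C_i}\,b_l$ lie in $L^{p,q}(\nu,\mi;B)$ and are mapped to the corresponding simple functions, so the image is dense and hence everything.

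\emph{Main obstacle.} The delicate point is the measurability step, because $\mi$ is not $\sigma$-finite and $(\nu\otimes\mi)$-measurability must be read locally. I will handle this by working on $Y\times K$ with $K$ compact, and then using inner regularity to pass back to all of $X$. Note that $f\in L^{p,q}$ vanishes outside a countable union of compacts up to a locally negligible set, since $x\mapsto\norm{f(\,\cdot\,,x)}_{L^p(\nu;B)}$ is in $L^q(\mi)$ with $q<\infty$. This lets me reduce to countably many compacts.
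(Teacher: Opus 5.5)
Your proposal is correct and follows the same outline as the paper. Measurability of $x\mapsto f(\,\cdot\,,x)$ comes from approximation by step functions on rectangles: you use norm density plus an a.e.\ convergent subsequence obtained from Tonelli on $Y\times K$, where the paper reduces to indicators of rectangles and of $(\nu\otimes\mi)$-negligible sets. The isometry follows from Tonelli, and surjectivity holds because the closed image contains all simple functions; the completeness of $L^{p,q}(\nu,\mi;B)$ you invoke for closedness is also tacitly used by the paper and follows from the usual Riesz--Fischer argument.
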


The main point of the proof is showing that the mapping $X\ni x\mapsto f(\,\cdot\,,x)\in L^p(\nu;B)$ is $\mi$-measurable for every $f\in L^{p,q}(\nu,\mi;B)$; this is proved reducing to integrable simple functions, and then to characteristic functions of the form $\chi_N$, for $N$ $(\nu\otimes \mi)$-negligible (in which case the assertion follows from Tonelli's theorem), or $\chi_{A\times B}$ (in which case the assertion is obvious). The fact that the mapping is isometric is then obvious, while the fact that it is onto follows from the elementary observation that its image contains all simple functions.

Let us now discuss our choice to consider $\widetilde \cM_R$ instead of $\cM_R$: this is due to the fact that we were not able to show that the function   $(y,x)\mapsto [\cM_R f(y,\,\cdot\,)](x)$ is  $(\nu_1\otimes \cdots \otimes\nu_k\otimes \mi)$-measurable. For the centred maximal function, measurability is a consequence of the following elementary remark.

\begin{oss}\label{oss:3}
	Let $(Y,\Mf,\nu)$ be a $\sigma$-finite measure space and take $R>0$ and a $(\nu\otimes \mi)$-measurable function $f\colon Y\times X\to \C$. Assume that $\mi(B(x,R))<\infty$ for every $x\in X$. Then, the function
	\[
	Y\times X \ni (y,x)\mapsto \big(\widetilde \cM_R f(y,\,\cdot\,)\big)(x)\in [0,+\infty]
	\]
	is $(\nu\otimes \mi)$-measurable.
\end{oss}

One should apply this result with $(Y,\Mf,\nu)$ chosen as the (completed) product of the measure spaces $(Y_j,\Mf_j,\nu_j)$, $j=1,\dots, k$, with the notation of Theorem~\ref{teo:2}.  
Besides that, one has to interpret $(\nu\otimes \mi)$-measurability in an appropriate way, so that a subset $E$ of $Y\times X$ is $(\nu\otimes \mi)$-measurable if and only if, for every compact subset $K$ of $X$,  $E\cap (Y\times K)$ belongs to the completed  product of the $\sigma$-algebra  $\Mf$ with the Borel $\sigma$-algebra on $X$. One may then verify that $\nu\otimes \mi$ induces a measure on the $\sigma$-algebra of $(\nu\otimes \mi)$-measurable sets.

\begin{proof}
	Notice first that, for every $r\in (0,R]$, the mapping $(x,x')\mapsto \chi_{D_r}(x,x')$, where $D_r\coloneqq\{\,(x,x')\in X\times X\colon d(x,x')<r\,\}$, is lower semi-continuous on $X\times X$. Therefore, by Tonelli's theorem, the mapping $x\mapsto \mi(B(x,r))= \int_X \chi_{D_r}(x,x')\,\dd \mi(x')$ is lower semi-continuous as well. Again by Tonelli's theorem, since the mapping $(y,x,x')\mapsto f(y,x')\chi_{D_r}(x,x')$ is $(\nu\otimes \mi\otimes \mi)$-measurable, the mapping $(y,x)\mapsto \int_{B(x,r)} \abs{f(y,x')}\,\dd \mi(x')$ is $(\nu\otimes \mi)$-measurable.\footnote{Here one needs to pay some attention if $\mi$ is not $\sigma$-finite. However,  we may reduce to the case $r<R$ and to proving measurability on $Y\times B(x_0,R-r)$ for every $x_0\in X$; since $(y,x,x')\mapsto \chi_{B(x_0,R-r)}(x) f(y,x')\chi_{D_r}(x,x')$ is concentrated on  $Y\times B(x_0,R)\times B(x_0,R-r)$, one may apply the usual Tonelli's theorem.} Consequently, the mapping
	\[
	Y\times X \ni (y,x)\mapsto \sup_{r\in (0,R]\cap \Q} \dashint_{B(x,r)} \abs{f(y,x')}\,\dd \mi(x')\in [0,+\infty]
	\]
	is $(\nu\otimes \mi)$-measurable. The conclusion follows from the fact that the mapping 
	\[
	(0,R]\ni r\mapsto \dashint_{B(x,r)} \abs{f(y,x')}\,\dd \mi(x')\in [0,+\infty]
	\]
	is left continuous by monotone convergence, so that
	\[
	\sup_{r\in (0,R]\cap \Q} \dashint_{B(x,r)} \abs{f(y,x')}\,\dd \mi(x')=\sup_{r\in (0,R] } \dashint_{B(x,r)} \abs{f(y,x')}\,\dd \mi(x')
	\]
	for every $x\in X$.
\end{proof}

We now prove a technical lemma, following the approach in the doubling case. 

\begin{lem}\label{lem:5b}
	Take $R>0$ so that $D_{4R}<\infty$. 
	Define $V_r(x)\coloneqq \mi(B(x,r))$ for every $x\in X$ and for every $r>0$.
	There are a family $(S_r)_{r\in (0,R]}$ of positive continuous functions on $X\times X$ and a constant $C>1$ such that the following hold:
	\begin{enumerate}
		\item[\textnormal{(1)}] $S_r(x,y)=0$ if $d(x,y)\Meg 3 r$;
		
		\item[\textnormal{(2)}] $S_r(x,y)=S_r(y,x)$ for every $x,y\in X$;
		
		\item[\textnormal{(3)}] $\int_X S_r(x,y)\,\dd \mi(y)=1$ for every $x\in X$;
		
		\item[\textnormal{(4)}] for every $x,y\in X$ with $d(x,y)<r/2$
        \begin{equation*}
            S_r(x,y)\Meg \frac{1}{  D_{4R}^5 \min(V_r(x),V_r(y))};
        \end{equation*}
		
		\item[\textnormal{(5)}] for every $x,y\in X$
        \begin{equation*}
            S_r(x,y)\meg \frac{2D_{4R}^2}{V_r(x)+V_r(y)};
        \end{equation*}
		
		\item[\textnormal{(6)}] for every $x,x',y\in X$
        \begin{equation*}
            \abs{S_r(x,y)-S_r(x',y)}\meg \frac{C D_{4R}^{8} d(x,x')}{r(\min(V_r(x),V_r(x'))+V_r(y))}.
        \end{equation*}
	\end{enumerate}
\end{lem}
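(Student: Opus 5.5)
We follow the classical construction of approximate identities on doubling spaces (Coifman--Weiss style), making it symmetric and normalized. Fix a Lipschitz cut-off $h\colon[0,\infty)\to[0,1]$ with $h=1$ on $[0,1/2]$, $h=0$ on $[1,\infty)$, and $\abs{h'}\meg 4$. For $r\in(0,R]$ we set
\[
T_r(x,y)\coloneqq h\bigl(d(x,y)/r\bigr),\qquad \psi_r(x)\coloneqq \int_X \frac{T_r(x,z)}{V_r(z)}\,\dd\mi(z),
\]
and then
\[
S_r(x,y)\coloneqq \int_X \frac{T_r(x,z)\,T_r(z,y)}{\psi_r(x)\,V_r(z)\,\psi_r(y)}\,\dd\mi(z)\cdot \frac{1}{\Phi_r(x)}\quad\text{(to be adjusted)}.
\]
Symmetry and normalization cannot both come from a single division. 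The plan is therefore the standard two-step device: first build the symmetric kernel
\[
Q_r(x,y)\coloneqq \int_X \frac{T_r(x,z)\,T_r(z,y)}{V_r(z)}\,\dd\mi(z),
\]
which is symmetric, vanishes for $d(x,y)\Meg 2r$, and satisfies $Q_r(x,y)\asymp 1$ (up to powers of $D_{4R}$) when $d(x,y)<r/2$. Then we normalize via the Sinkhorn-type formula $S_r(x,y)=Q_r(x,y)/\bigl(\lambda_r(x)\lambda_r(y)\bigr)$. If exact normalization is hard, we fall back on the Coifman--Weiss trick: with $m_r(x)=\int Q_r(x,y)\,\dd\mi(y)$, take
\[
S_r(x,y)=\frac{Q_r(x,y)}{m_r(x)}+\frac{Q_r(x,y)}{m_r(y)}-\int_X\frac{Q_r(x,z)Q_r(z,y)}{m_r(x)m_r(z)m_r(y)}\,\dd\mi(z)\cdot(\ldots).
\]
Concretely, we use $S=A+A^*-A^*A$ with $A(x,y)=Q_r(x,y)/m_r(y)$. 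This operator is symmetric, and since $\int A(x,y)\,\dd\mi(x)=1$ it has unit integral in each variable. Its support widens to $d(x,y)<3\cdot$(support of $A$) only if we scale $h$ to be supported in $[0,1/2]$; we choose the scalings so that all kernels vanish for $d(x,y)\Meg 3r$, giving (1).

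The key comparison facts, all coming from $D_{4R}$ and the triangle inequality, are these. If $d(x,z)<2r$ then $V_r(x)\meg D_{4R}^2V_r(z)$, since $B(x,r)\subseteq B(z,3r)\subseteq B(z,4r)$. Consequently $m_r(x)\asymp 1$ with constants powers of $D_{4R}$, and $Q_r(x,y)\asymp 1/V_r(x)\asymp 1/V_r(y)$ on the relevant regions. These give the lower bound (4) on $d(x,y)<r/2$, using $\min(V_r(x),V_r(y))$, and the upper bound (5), using $V_r(x)+V_r(y)\meg 2\max$ and comparability. Positivity of $A+A^*-A^*A$ is the delicate point. We handle it by writing $S=A^*(2I-A)$-type expressions only after checking $A^*A\meg$ a fixed fraction of $A+A^*$ pointwise, which follows from the size estimates if the normalization constants are tracked; otherwise we switch to the exact Sinkhorn normalization, whose existence for bounded, comparable kernels is standard.

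For continuity and (6), each $T_r(\cdot,z)$ is $(4/r)$-Lipschitz. Hence $\abs{Q_r(x,y)-Q_r(x',y)}\lesssim d(x,x')/(r\,V_r(y))$, and $m_r$ is Lipschitz with constant $\lesssim 1/r$ and bounded below. The quotient and product rules then give the bound in (6) with $\min(V_r(x),V_r(x'))+V_r(y)$ in the denominator, after separating the cases $d(x,x')\meg r$ (where comparability holds) and $d(x,x')>r$ (where (5) alone suffices, since $d(x,x')/r\Meg 1$). Continuity of $V_r$ is not needed, since $V_r$ appears only inside integrals against continuous cut-offs. The main obstacle is keeping positivity together with exact normalization and symmetry, and bookkeeping the powers of $D_{4R}$ to reach exactly $D_{4R}^5$, $2D_{4R}^2$, and $D_{4R}^8$.
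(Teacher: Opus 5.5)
There is a genuine gap exactly at the step you flag yourself: you never exhibit a kernel that is at once symmetric, nonnegative and exactly normalized, and your fallback $S=A+A^*-A^*A$ fails.

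Since $\int_X A(x,y)\,\dd\mi(x)=1$ means $A^*1=1$, the combination with unit integrals is $I-(I-A)(I-A^*)=A+A^*-AA^*$, not the one you wrote. Even after this correction, positivity fails for a support reason that no tracking of constants can repair. If the cut-off is supported in $\{d<\rho r\}$, then $A+A^*$ vanishes when $d(x,y)\Meg 2\rho r$. But $AA^*$ is a four-fold composition, so it is strictly positive for pairs with $2\rho r\meg d(x,y)<4\rho r$. This already happens on $\R^n$ with Lebesgue measure, where $A=A^*$ and $S=2A-A^2$. There $S=-AA^*<0$, so the pointwise bound $AA^*\meg c\,(A+A^*)$ is false.

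Positivity everywhere is not cosmetic: the proof of Theorem~\ref{teo:2} uses $S_r(x,y)\Meg \chi_{B(x,r/2)}(y)/(D_R^5V_r(x))$ for \emph{all} $x,y$.

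Your second fallback does not close the gap either. That is a symmetric Sinkhorn scaling, $\lambda(x)=\int_X Q_r(x,y)\lambda(y)^{-1}\,\dd\mi(y)$. Its existence is not a standard fact for a band-limited kernel on an infinite, possibly non-$\sigma$-finite space. Even granting existence, you would still need two-sided bounds on $\lambda$ and a Lipschitz estimate for it to reach (4)--(6) with the powers $D_{4R}^5$, $2D_{4R}^2$, $D_{4R}^8$. None of this is provided.

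The missing idea is Coifman's normalization, which shows that your opening claim (that symmetry and normalization cannot come from one formula) is wrong; you simply put the weights in the wrong places. With $(T_r1)(x)=\int_X h(d(x,y)/r)\,\dd\mi(y)$, the paper takes
\[
S_r(x,y)=\frac{1}{(T_r1)(x)\,(T_r1)(y)}\int_X\frac{h(d(x,z)/r)\,h(d(z,y)/r)}{T_r(1/T_r1)(z)}\,\dd\mi(z).
\]
The outer weight is $1/T_r1$, built from the same cut-off (not $V_r$, and not $\psi_r$), and the inner weight is $1/T_r(1/T_r1)$. Integrating first in $y$ produces $T_r(1/T_r1)(z)$, which cancels the inner denominator. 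Integrating then in $z$ produces $(T_r1)(x)$, which cancels the outer one. So (3) is a two-line Fubini computation, and symmetry, positivity and continuity are evident.

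Take $h\in C^1(\R)$ with $\chi_{[0,1]}\meg h\meg\chi_{[-1,\eta]}$ and $\eta\in(1,7/6]$. Then $V_r\meg T_r1\meg V_{\eta r}$ and $D_{4R}^{-2}\meg T_r(1/T_r1)\meg D_{4R}^{2}$, from which (1), (4) and (5) follow directly. For (4), the plateau $h=1$ on $[0,1]$ gives $B(x,r)\cap B(y,r)\supseteq B(x,r/2)$ when $d(x,y)<r/2$. Your $h=1$ only on $[0,1/2]$ would cost extra factors of $D_{4R}$. Finally, (6) follows by splitting $S_r(x,y)-S_r(x',y)$ into the variation of $1/T_r1$ and the variation of $h(d(\cdot,z)/r)$. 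Each is controlled by $\norm{h'}_{L^\infty}\,d(x,x')/r$, and the case of large $d(x,x')$ is handled by (5).
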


This lemma is essentially a particular case of~\cite[Lemma 2.1]{Muller}. We (slightly) simplify some arguments and highlight the dependence on the appropriate doubling constant.

\begin{proof}
	Take $h\in C^1(\R)$ with $\chi_{[0,1]}\meg h \meg \chi_{[-1,\eta]}$, for some $\eta\in (1,7/6]$. For every $r\in (0,R]$, consider the operator $T_r$ defined so that
	\[
	(T_r f)(x) =\int_X h(d(x,y)/r) f(y)\,\dd \mi(y)
	\]
	for every $\mi$-measurable function $f$ and for every $x\in X$ for which the above integral is defined. Observe that $T_r$ maps $L^\infty(\mi)$ into $C(X)$ continuously by Lemma~\ref{lem:0} and by dominated convergence, that $V_r\meg T_r 1 \meg V_{\eta r}$, and that
    \begin{align*}
        D_{4R}^{-2} V_{2\eta r}(x)
        \meg& D_{4R}^{-2}V_{3\eta r}(y)
        \meg V_r(y)
        \meg (T_r 1)(y)\\
        \meg& V_{\eta r}(y)
        \meg V_{2\eta r}(x)
        \meg D_{4R}^2 V_r(x)
    \end{align*}
	for every $ x,y\in X$ such that $d(x,y)\meg \eta r$. Consequently, for every $x\in X$,
	\begin{align*}
	    D_{4R}^{-2}
        \meg&\int_{B(x,r) } \frac{1}{(T_r 1)(y)} \,\dd \mi(y)  \meg T_r(1/T_r 1)(x)\\
        \meg& \int_{B(x, \eta r)} \frac{1}{(T_r 1)(y)} \,\dd \mi(y)
        \meg D_{4R}^{2}.
	\end{align*}
	Define, for every $x,y\in X$,
	\[
	S_r(x,y)\coloneqq \frac{1}{(T_r1)(x) (T_r 1)(y) } \int_X\frac{ h(d(x,z)/r) h(d(z,y)/r)}{T_r(1/T_r 1)(z)}\,\dd \mi(z).
	\]
	The continuity of $S_r$ is clear, as well as (1) (with $3$ replaced by $2\eta$) and (2); (3) follows from Fubini's theorem. 
	Concerning (4), observe that, if $d(x,y)<r/2$, then
	\begin{align*}
	    S_r(x,y)
        \Meg& \frac{1}{D_{4R}^{2}V_{\eta r}(x) V_{\eta r}(y)} \mi(B(x,r)\cap B(y,r))\\
        \Meg& \frac{1}{D_{4R}^{4}V_{ r}(x) V_{ r}(y)} \mi(B(x,r)\cap B(y,r)),
	\end{align*}
	whence our assertion since
    \begin{equation*}
        \mi(B(x,r)\cap B(y,r))\Meg \max(V_{r/2}(x), V_{r/2}(y))\Meg D_{4R}^{-1} \max(V_r(x), V_r(y)).
    \end{equation*}
	Now, note that
	\[
	S_r(x,y)\meg  \frac{D_{4R}^{2} }{(T_r1)(x)   (T_r 1)(y) } \int_X  h(d(z,y)/r )\,\dd \mi(z)\meg  \frac{D_{4R}^{2} }{V_r(x)}.
	\]
	By symmetry, we also have $S_r(x,y)\meg \frac{D_{4R}^{2} }{V_r(y)}$, which gives (5).
	
	Finally, let us prove (6). Notice that, if $d(x,x')\Meg 4\eta r$, then either $S_r(x,y)=0$ or $S_r(x',y)=0$, so that the assertion follows from (5). Then, assume that   $d(x,x')<4\eta r$ and observe that, if $S_r(x,y)-S_r(x',y)\neq 0$, then  $d(x,y)<6\eta r$ and $d(x',y)<6\eta r$, so that arguing as before we can see that
	\[
	\min(V_r(x),V_r(x')) \Meg D_{4R}^{-3} \min(V_{8 r}(x),V_{8r}(x'))\Meg \frac{\min(V_r(x),V_r(x'))+V_r(y)}{2 D_{4R}^3}.
	\]
	Next, observe that
	\[
	\begin{split}
		&S_r(x,y)-S_r(x',y)\\
        &= \left( \frac{1}{(T_r1)(x)}-\frac{1}{(T_r1)(x')}\right) \frac{1}{T_r(y) }\int_X\frac{ h(d(x,z)/r) h(d(z,y)/r)}{T_r(1/T_r 1)(z)}\,\dd \mi(z)\\
		&\qquad + \frac{1}{(T_r1)(x') (T_r1)(y)}\int_X\frac{ [h(d(x,z)/r)-h(d(x',z)/r)] h(d(z,y)/r)}{T_r(1/T_r 1)(z)}\,\dd \mi(z).
	\end{split}
	\]
	Concerning the first term,
	\begin{align*}
		\abs*{\frac{1}{(T_r1)(x)}-\frac{1}{(T_r1)(x')}}&\meg \frac{\abs{(T_r1)(x)-(T_r1)(x')}}{V_r(x) V_r(x')}\\
		&\meg \frac{1}{V_r(x) V_r(x')} \norm{h'}_{L^\infty(\R)} \int_{B(x',5\eta r)} \frac{\abs{d(x,y)-d(x',y)}}{r}\,\dd \mi(y)\\
		&\meg \frac{V_{8 r}(x')}{V_r(x) V_r(x')} \norm{h'}_{L^\infty(\R)}\frac{d(x,x')}{r}\\
		&\meg D_{4R}^3  \norm{h'}_{L^\infty(\R)}\frac{d(x,x')}{r V_r(x)},
	\end{align*}
	so that 
	\begin{align*}
	&\abs*{ \left( \frac{1}{(T_r1)(x)}-\frac{1}{(T_r1)(x')}\right) \frac{1}{(T_r1)(y) }\int_X\frac{ h(d(x,z)/r) h(d(z,y)/r)}{T_r(1/T_r 1)(z)}\,\dd \mi(z)}\\
    &\qquad\meg D_{4R}^5  \norm{h'}_{L^\infty(\R)}\frac{d(x,x')}{r V_r(x)}.
	\end{align*}
	Concerning the second term,   arguing as before we see that
	\[
	\begin{split}
		\frac{1}{(T_r1)(x') (T_r1)(y)}&\int_X\frac{ \abs{h(d(x,z)/r)-h(d(x',z)/r)} h(d(z,y)/r)}{T_r(1/T_r 1)(z)}\,\dd \mi(z)\\
        &\meg \frac{D_{4R}^{2}}{(T_r1)(x')}   \norm{h'}_{L^\infty(\R)} \frac{d(x,x')}{r}  \\
		&\meg \frac{D_{4R}^{2}}{V_r(x')}   \norm{h'}_{L^\infty(\R)} \frac{d(x,x')}{r}  ,
	\end{split}
	\]
	so that the assertion follows. 
\end{proof}
 
\begin{proof}[Proof of Theorem~\ref{teo:2}.] 
 	Take $(S_r)_{r\in (0,  R/4]}$ and define $V_r$ as in Lemma~\ref{lem:5b}, so that there is a  constant $C_1>1$ such that
 	\begin{align*}
 	    \frac{1}{ D_{R}^6 V_{r/2(x)}}\chi_{B(x,r/2)}(y)
        &\meg \frac{1}{  D_{R}^5 V_r(x)}\chi_{B(x,r/2)}(y)
        \meg S_r(x,y)\\
        &\meg  \frac{2D_{R}^2 }{ V_r(x)} \chi_{B(x,3r)}(y)
        \meg \frac{2D_{R}^4 }{ V_{3r}(x)} \chi_{B(x,3r)}(y)
 	\end{align*}
	for every $x,y\in X$ and for every $r\in (0,  R/4]$, while
	\[
	\abs{S_r(x,y)-S_r(x,y')}\meg \frac{C_1 D_{R}^{8} d(y,y')}{r \min(V_r(y),V_r(y'))}
	\]
	for every $x,y,y'\in X$. Set $R'\coloneqq R/63$.
	For every finite  subset $J$ of $\N$, consider the linear operator $T_J\colon L^\infty(\mi)\to L^\infty(\mi;\ell^\infty(J))$ defined so that $T_J f(x)= \int_X K_J(x,y)f(y)\,\dd \mi(y)$, where
	\[
	K_J(x,y)=( S_{ 2^{-j}  R'}(x,y)   )_{j\in J}
	\]
	for every $x,y\in X$. 	Observe that $K_J\colon X\times X\to \ell^\infty(J)$ is continuous, so that  $T_J$ is   well defined. 
	In addition, clearly
	\[
	\norm{(T_J f)(x)}_{\ell^\infty(J)}\meg 2D_{4R'}^4\norm{f}_{L^\infty(\mi)}
	\]
	for every $J$, for every $x\in X$, and for every $f\in L^\infty(\mi)$.
	Now, take $y,y'\in X$, with $y\neq y'$. Then,  
	\[
	\begin{split}
	&\int_{X\setminus B(y,2d(y,y'))} \norm{K_J(x,y)-K_J(x,y')}_{\ell^\infty(J)}\,\dd \mi(x)\\
    &\quad\meg \sum_{j\in \N} \int_{ X\setminus B(y,2d(y,y'))} \abs{S_{2^{-j}  R'}(x,y)-S_{2^{-j}  R'}(x,y')}\,\dd \mi(x)\\
		&\quad=  \sum_{2^{-j}3  R'\Meg d(y,y')} \int_{ (B(y,2^{-j}3  R')\cup B(y',2^{-j}3  R'))\setminus B(y,2d(y,y'))} \abs{S_{2^{-j}  R'}(x,y)-S_{2^{-j}  R'}(x,y')}\,\dd \mi(x)\\
		&\quad\meg \frac{C_1 D_{R}^{8}}{   R'}\sum_{2^{-j}3  R'\Meg d(y,y')} \int_{  B(y,2^{-j}3   R')\cup B(y',2^{-j}3  R')} \frac{2^j d(y,y')}{\min(V_{2^{-j}  R'}(y),V_{2^{-j}  R'}(y'))}\,\dd \mi(x)\\
		&\quad\meg  \frac{C_1D_{R}^{8}}{   R'}\sum_{2^{-j}3  R'\Meg d(y,y')}   2^j d(y,y') \frac{ V_{2^{-j}3  R'}(y)+V_{2^{-j}3  R'}(y')}{\min(V_{2^{-j}  R'}(y),V_{2^{-j}  R'}(y'))}\\
			&\quad \meg 2\frac{C_1D_{ R}^{8}}{   R'}\sum_{2^{-j}3 R'\Meg d(y,y')} 2^j d(y,y')   \frac{\min(V_{2^{3-j}  R'}(y),V_{2^{3-j}  R'}(y'))}{\min(V_{2^{-j}  R'}(y),V_{2^{-j}  R'}(y'))}\\
			&\quad \meg 12C_1 D_{R}^{11} .
	\end{split}
	\] 
	By means of Corollary~\ref{cor:2}, we may then conclude that
	\[
	\norm{T_J f}_{L^p(\mi;\ell^\infty(J))}\meg 16\, D_{R}^{5}p'^{1/p}(2D_{R}^{13}+12 C_1 D_{R}^{11} )\norm{f}_{L^p(\mi)}
	\]
	for every $J$  and for every $f\in L^\infty_{c}(\mi)$.  
	
	In order to simplify the notation, we write $p^{(k')}\coloneqq (p_1,\dots, p_{k'})$ and $\nu^{(k')}\coloneqq (\nu_1,\dots, \nu_{k'})$ for every $k'\in \Set{0,\dots, k}$.  
	Now, assume by induction that, for some $k'\in \Set{0,\dots, k-1}$ (with some abuse of notation),
	\[
	\begin{split}
	 &\norm{ [T_J f(\,\cdot\,)(y)](x)}_{L^p_x(\mi;L_y^{p^{(k')}}(\nu^{(k')};\ell^\infty(J)))}\\
     &\quad\meg 16^{k'+1} (2+(k'+1)12 C_1) D_{R}^{18+16k'}  p_1'^{1/p_1} \phi(p_{k'},p)\prod_{j=1}^{k'-1} \phi(p_j,p_{j+1}) \norm{f}_{L^p(\mi;L^{p^{(k')}} (\nu^{(k')}))}
	\end{split}
	\]	
	for every $f\in   L^{p}\big(\mi;L^{p^{(k')}} \big(\nu^{(k')}\big)\big)$ and for every $J$, and let us prove the analogous assertion for $k'+1$ (notice that the case $k'=0$ has been established above).
	To this aim, we shall apply  Corollary~\ref{cor:2}  to $B_1=L^{p^{(k'+1)}} (\nu^{(k'+1)})$, $B_2= L^{p^{(k'+1)}} (\nu^{(k'+1)};\ell^\infty(J))$, $r=p_{k'+1}$, $\kappa=7/3$, $R=9R'$, and to the operator $T$ defined so that
	\[
	T f(x)= \int_X K_J(x,x')f(x')\,\dd \mi(x'),
	\]
	where $[K_J(x,x')f(x')](y)=K_J(x,x') [f(x')(y)]$ for almost every $y$. Note that $K_J(x,y)-K_J(x',y')$ has the same norm as an element of $\ell^\infty(J)$ and as its canonical extension to an  element of $\Lc(B_1;B_2)$ (provided that all $\nu_j$, $j=1,\dots,k'+1$, be non-trivial; otherwise, there is nothing to prove). 
	Observe that, since the $L^{p_{k'+1}}(\mi )$ norm and the $L^{p_{k'+1}}(\nu_{k'+1})$ norm commute, (ii)$_{9R'}$ and (ii)$'_{9R'}$ hold with
    \begin{equation*}
        A_{9R'}=16^{k'+1} (2+(k'+1)12 C_1) D_{R}^{18+16k'} p_1'^{1/p_1}  \prod_{j=1}^{k'} \phi(p_j,p_{j+1}).
    \end{equation*}
	Then, by means of  Corollary~\ref{cor:2}) we see that 
	\[
	\begin{split}
	 &\norm{T_{J} f}_{L^{p}(\mi;L^{p^{(k'+1)}}(\nu^{(k'+1)};\ell^\infty(J)))}\\
     &\quad\meg 16^{k'+2} (2+(k'+2)12 C_1) D_{R}^{18+16(k'+1)}\times\\
     &\qquad\times p_1'^{1/p_1} \phi(p_{k'+1},p)\prod_{j=1}^{k'} \phi(p_j,p_{j+1})   \norm{f}_{L^{p }(\mi;L^{p^{(k'+1)}}(\nu^{(k'+1)}))}
	\end{split}
	\]
	for every $p\in (1,\infty)$ and for every  $f\in   L^{\infty }( \mi;L^{p^{(k'+1)}}(\nu^{(k'+1)}))$.
	
	Now, observe that, for every $j\in \N$, for every $\mi$-measurable function $f\colon X\to \C$ and for every $x \in X$ and $r\in (2^{-j-2}  R', 2^{-j-1}  R']$,
	\[
	\dashint_{B(x,r)} \abs{f}\,\dd \mi\meg D_{  R'} \dashint_{B(x,2^{-j-1}  R')} \abs{f}\,\dd \mi
	\]
	since $\mi(B(x,r))\Meg D_{  R'}^{-1}\mi(B(x,2 r))\Meg D_{  R'}^{-1}   \mi(B(x,2^{-j-1} R'))$.
	Consequently, by the monotone convergence theorem,
	\[
	(\widetilde \cM_{  R'/2} f)(x)\meg D_{  R'}\sup_{j\in\N} \dashint_{B(x,2^{-j-1}R')} \abs{f}\,\dd \mi  \meg   \frac{D_{R'}^7}{C_1} \sup_{J\in \Pc_0(\N)} \norm{T_J f(x)}_{\ell^\infty(J)}
	\]
	where $\Pc_0(\N)$ denotes the set of finite subsets of $\N$. The assertion with $2R$ replaced by $  R'/2=R/126$  then follows by means of the monotone convergence theorem.

	In order to complete the proof, we need to estimate the operator $\cM'$ defined by
	\[
	(\cM' f)(x)\coloneqq   \frac{1}{V_{ R/126 }(x)}\int_{B(x,2R)} \abs{f}\,\dd \mi.
	\]
	Notice that, by Minkowski's integral inequality, we no longer need to deal with mixed-norm spaces.
	Then, it will suffice to apply  Schur's lemma, since
	\[
	\sup_{x\in X} \int_{B(x,2R)} \frac{1}{V_{ R/126}(x)}\,\dd \mi(y)\meg D_R^{8}.
	\]
	The assertion follows. 
\end{proof}

\end{document}